\newtheorem{prop}{Proposition}
\newcolumntype{C}{>{\centering\arraybackslash}X}
\newcolumntype{L}{>{\raggedright\arraybackslash}X}
\definecolor{ao(english)}{rgb}{0.0, 0.5, 0.0}
\pgfplotsset{compat=1.17}
\title{PDE-Free Mass-Constrained Learning of Complex Systems with Hidden States}
\author{
\textbf{Gianmaria Viola\textcolor{blue}{$^{1}$}, Alessandro Della Pia\textcolor{blue}{$^{2}$}, Ioannis Kevrekidis\textcolor{blue}{$^{3}$}, Lucia Russo \textcolor{blue}{$^{1,*}$}, Constantinos Siettos\textcolor{blue}{$^{4,}$}\thanks{Corresponding authors, emails: \texttt{lucia.russo@stems.cnr.it, constantinos.siettos@unina.it}}}
{}\\
\textcolor{blue}{$^{(1)}$}Institute of Science and Technology for Energy and Sustainable Mobility (STEMS), \\ \emph{Consiglio Nazionale delle Ricerche (CNR)}, Naples 80125, Italy\\
\textcolor{blue}{$^{(2)}$}Modeling and Engineering Risk and Complexity, \\ \emph{Scuola Superiore Meridionale, School for} \emph{Advanced Studies}, Naples 80138, Italy \\
\textcolor{blue}{$^{(3)}$}Department of Chemical and Biomolecular Engineering, Department of Applied Mathematics and \\
\hspace{0.39cm}Statistics \& Department of Urology, \emph{Johns Hopkins University}, Baltimore, MD 21218, USA\\
\textcolor{blue}{$^{(4)}$}Dipartimento di Matematica e Applicazioni ‘‘Renato Caccioppoli", \\ \emph{Universit\`a degli Studi di Napoli} \emph{‘‘Federico II"}, Naples 80126, Italy\\
}
\pgfplotsset{compat=1.17}
\date{\today}
\begin{document}

\maketitle

\begin{abstract}

We propose a three-tier machine learning (ML) framework based on the so-called \textit{next-generation Equation-free} algorithms for learning the spatio-temporal dynamics of mass-constrained complex systems with hidden states, whose dynamics can in principle be described by partial differential equations (PDEs), but lack explicit models. 
In the first step, we employ Diffusion Maps (DMs), a nonlinear manifold learning algorithm, to extract low-dimensional latent representations of the complex spatio-temporal evolution. In the second step, we learn manifold-informed reduced-order models (ROMs) with Sparse Identification of Nonlinear Dynamics (SINDy) and standard linear Multivariate Autoregressive models (MVARs) to approximate the solution operator on the latent space. In the final step, the latent dynamics are lifted back to the original high-dimensional space by solving an (ill-posed) pre-image problem via a convex interpolation based on the $k$-NN algorithm. In doing so, the proposed framework reconstructs the solution operator of the unknown mass-constrained PDE, without explicitly identifying the PDE itself. For comparison purposes, we also evaluated the performance of the scheme for constructing ROMs based on Proper Orthogonal Decomposition (POD) and prove that both POD and the $k$-NN lifting operators preserve the mass. We illustrate the approach using two benchmark problems: (a) the Hughes model of crowd dynamics, which minimizes walking time while avoiding obstacles and high-density regions, and (b) a CFD problem involving the spatio-temporal evolution of a passive tracer advected by a periodic Navier--Stokes velocity field. We show that ROMs informed by DMs yield parsimonious models that consistently outperform the best POD-informed ROMs, yielding stable and accurate approximations of the solution operator in the latent space and, via reconstruction, in the original high-dimensional space over long time horizons.

\end{abstract}

\section{Introduction}
\label{sec:intro}

Many real-world systems exhibit complex spatio-temporal dynamics that, in principle, can be described by partial differential equations (PDEs) but also depend on hidden variables~\cite{kemeth2018emergent,reinbold2019data,kemeth2022learning,sroczynski2024learning} that are not directly observable. These hidden variables often represent unknown closures, as in the case of computational fluid dynamics (CFD)~\cite{russo2007reduced,kutz2017deep,raissi2020hidden,joglekar2023machine,evangelou2022parameter,Kevrekidis_DM}, or internal states, as in biological systems~\cite{engelhardt2017bayesian,yazdani2020systems,lee2023learning,ahmadi2024ai}, or optimal transport strategies—such as those governing crowd dynamics~\cite{bellomo2011modeling,bellomo2012modeling,cristiani2014multiscale,cristiani2014overview,bellomo2022towards,bellomo2023human}. For modeling such systems, one may consider a general data-driven setting in which the specific functional forms or entire operators of the macroscopic PDEs governing the emergent crowd dynamics are not known explicitly. The goal in this case is to use data, either from high-fidelity microscopic simulations or from real-world measurements, to approximate the right-hand sides or closures of the effective macroscopic equations within a finite-dimensional space. This can be achieved through black-box or gray-box surrogate models, typically implemented as (deep) neural networks (DNNs)~\cite{Lee2020,Galaris2022,lee2023learning,fabiani2024task,de2024ai}. Alternative machine learning schemes, such as convolutional neural networks (CNNs)~\cite{qu2022learning,vlachas2022multiscale} and Gaussian processes~\cite{pang2020physics,chen2021solving}, have also been explored in similar contexts. 

However, approximating the right-hand side of PDEs in finite spaces using DNNs poses various challenges and limitations. For example, the accurate approximation of spatial and temporal derivatives requires access to high-resolution samples, and additional pre-processing is needed in the case of noisy data~\cite{Lee2020,lee2023learning}. Moreover, in real-world scenarios, data are typically available as discrete-time snapshots, limiting the accuracy of temporal derivative approximations. This can introduce significant modeling biases and numerical instabilities.  
Another approach is to approximate the integral evolution (solution) operator of PDEs, as defined by the corresponding Cauchy problem, using Neural Operators (NOs)~\cite{li2023deep,goswami2023physics,li2020fourier,peyvan2024riemannonets,zappala2024learning,fabiani2025randonets}. NOs approximate continuous evolution operators of PDEs in infinite-dimensional spaces, theoretically enabling mesh-independent solutions and broad generalization. However, the NO paradigm requires learning a highly nonlinear mapping between function spaces, which can demand large training datasets and may suffer from the curse of dimensionality. 

To circumvent the ``curse of dimensionality'' inherent in the training of surrogate DNN- or NO-based PDE models, learning Reduced Order Models (ROMs)~\cite{schilders2008model,quarteroni2014reduced} in well-constructed latent spaces is crucial to significantly lowering computational cost while capturing the essential dynamics using surrogate DNNs, Gaussian Processes, or NOs~\cite{papaioannou2022time,kemeth2022learning,evangelou2022double,kontolati2024learning,fabiani2024task}. These ROMs must also inherently preserve fundamental principles, such as stability for long-run simulations and ``physical'' accuracy when their dynamics are lifted back to the reconstructed space.

Within this context, we propose a fully data-driven \textit{``PDE-free''} machine learning framework based on nonlinear manifold learning and in particular Diffusion Maps \cite{coifman2005geometric,coifman2006diffusion}, inspired by our previous work on the \textit{Equation-Free multiscale framework}~\cite{kevrekidis2003equation} and \textit{``Next-Generation Equation-Free''} algorithms~\cite{arbabi2021particles,kemeth2022learning,papaioannou2022time,Patsatzis_2023,evangelou2022parameter,Kevrekidis_DM,koronaki2024nonlinear,alvarez2025next,sroczynski2024learning}. The proposed framework aims to learn the solution operator of the unavailable PDE for high-dimensional systems involving hidden or unobservable dynamics, without explicitly constructing a surrogate PDE per se, but by reconstructing the spatio-temporal dynamics based on ROMs in latent spaces learned with SINDy and with vanilla linear MVARs. We illustrate the methodology via two benchmark problems: (i) the Hughes PDE for crowd dynamics, (ii) the spatio-temporal evolution of a passive scalar under advection by a Navier–Stokes flow. The objective is to learn appropriate \textit{parsimonious} low-dimensional \textit{manifold-informed} ROMs that can implicitly represent hidden dynamics in latent spaces, possibly by augmenting observables with delayed coordinates thus exploiting Takens’ and Whitney’s embedding theorems~\cite{sauer1991embedology}. Spatio-temporal dynamics, usually modeled by PDEs, evolve on infinite-dimensional state spaces, but their dynamics often collapse to finite-dimensional attractors. Therefore, by applying time-delay embeddings to latent spaces, Whitney’s/Takens’ theorem establishes a diffeomorphic equivalence between the delay-coordinate latent model and the original system dynamics (see results and discussion in~\cite{sauer1991embedology,rico1992discrete,krischer1993model,shvartsman1998low,shvartsman2000order,papaioannou2022time,dylewsky2022principal,kemeth2022learning,axaas2023model,gallos2024data,patil2025separation}). In fact, while Takens’ theorem guarantees that a single generic scalar observable with sufficiently many delays can reconstruct the emergent dynamics, Whitney-type embedding theorems (see~\cite{sauer1991embedology}) indicate that a sufficiently expressive multivariate representation can result in fewer delays (even only a single delay per variable).
This idea for constructing such surrogate models can be traced back to the 1990s~\cite{deane1991low,rico1992discrete,krischer1993model,temam1995navier,graham1996alternative,shvartsman1998low,shvartsman2000order}, where POD was used to construct ROMs from spatio-temporal simulations of PDEs, while for large-scale complex systems, it was encoded in the \textit{Equation-Free multiscale framework}~\cite{kevrekidis2003equation}, bypassing the need to construct models in explicit form to enable numerical analysis tasks such as bifurcation and stability analysis for macroscopic (emergent) dynamics. In particular, to our knowledge, the first time that nonlinear ODEs were learned using NNs in latent spaces (discovered using autoencoders) based on time delays was in~\cite{rico1992discrete}—and it was performed on experimental electrodissolution data; while, the first time that this was done for distributed systems (PDEs) in latent spaces, using POD and time delays of POD coordinates, was in~\cite{krischer1993model}, in which the data were experimental spatiotemporal movies of PEEM of CO oxidation on Pt. The theoretical link of the spatiotemporal work with the theory of Approximate Inertial Manifolds~\cite{titi1990approximate,temam1995navier,constantin2012integral,koronaki2024nonlinear}, which underpins the existence of low-dimensional models for infinite-dimensional spatiotemporal systems, was used in~\cite{shvartsman1998low} and especially in~\cite{shvartsman2000order}, where NNs were used to identify the nonlinear Galerkin equations on the PDE inertial manifold. In recent years, this approach has been extended through the application of other manifold learning techniques such as Laplacian Eigenmaps~\cite{belkin2003laplacian,belkin2008towards}, Diffusion Maps (DMs)~\cite{coifman2005geometric,coifman2006diffusion,coifman2008diffusion,kemeth2018emergent,dsilva2018parsimonious,soize2022probabilistic,soize2024transient,Patsatzis_2023,chin2024enabling}, ISOMAP~\cite{tenenbaum2000global,bollt2007attractor}, and autoencoders (also introduced in the 1990s~\cite{kramer1991nonlinear}), combined with techniques for solving the pre-image problem~\cite{coifman2006geometric,chiavazzo2014reduced,papaioannou2022time,evangelou2022double,Patsatzis_2023}, and with surrogate machine learning models~\cite{papaioannou2022time,charalampopoulos2022uncertainty,vlachas2022multiscale,floryan2022data,conti2023reduced,gallos2024data,koronaki2024nonlinear,constante2024data,fabiani2024task,faraji2025shallow}, including multivariate autoregressive models (MVARs) and Gaussian Processes ~\cite{papaioannou2022time}, SINDy~\cite{brunton2016discovering,Rudy2017,conti2023reduced}, Koopman Operator \cite{gallos2024data}, Neural Operators \cite{kontolati2024learning}, and Operator inference for handling non-polynomial nonlinearities~\cite{Benner2020, Kramer2024}.

Our method proceeds in three main steps. In the first step, we identify the intrinsic latent coordinates using nonlinear manifold learning (DMs). For comparison, 
POD, widely used in the CFD community, is also employed to highlight the advantages of DMs. In the second step, we learn the latent dynamics via surrogate models (here for our illustrations via SINDy but also with vanilla linear MVARs with delayed coordinates.) In the third step, we reconstruct the spatio-temporal dynamics by lifting the learned latent dynamics back into the high-dimensional physical space. While POD lifting reduces to a straightforward linear projection, lifting from DMs (or from any other nonlinear manifold learning algorithm) involves solving an ill-posed nonlinear pre-image problem, which we address here through a $k$-nearest neighbors ($k$-NN)-based decoder with convex interpolation~\cite{chin2024enabling}. First, we prove that both the linear POD and $k$-NN lifting operators preserve the mass. We then demonstrate that DMs-informed ROMs yield parsimonious models that consistently outperform the best POD-informed ROMs. Remarkably, SINDy achieves this using only a single time delay, thereby demonstrating that DMs provide a rich representation of the embedded manifold. Overall, the proposed framework provides stable and accurate approximations of the solution operator in the latent space and, through reconstruction, in the original high-dimensional space over long time horizons.

The paper is organized as follows. In Section~\ref{sec:three-tier}, we present the proposed methodology. We first prove that both the POD lifting operator preserves the mass in the reconstructed space. We then briefly describe the DMs nonlinear manifold learning algorithm and prove that the corresponding $k$-NN (nonlinear) decoder preserves both mass and positivity of the reconstruction state. Manifold-informed MVARs and SINDy surrogate models are presented in Sections~\ref{subsec:multivariate} and \ref{subsec:SINDy}. In Section~\ref{sec:casestudies}, we describe the two benchmark case studies. 
Numerical results are presented in Section~\ref{sec:results}, and conclusions are drawn in Section~\ref{sec:conclusions}. 
More details on benchmark models, and numerical simulations are provided in Appendices~A-D.

\section{The Methodology}
\label{sec:three-tier}

\begin{figure}
    \centering  \includegraphics[width=1\linewidth]{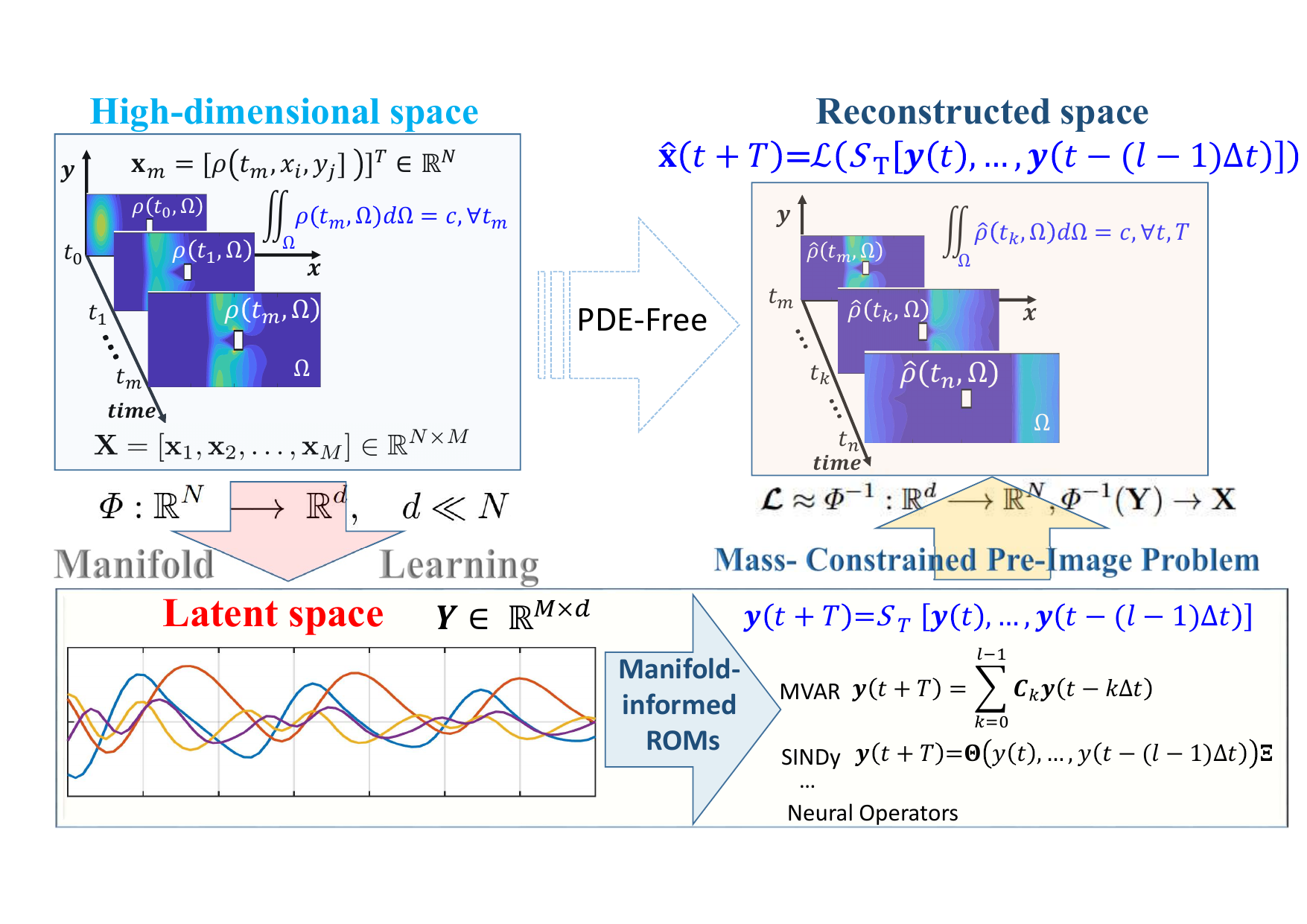}
    \caption{Schematic of the PDE-free mass-preserving approach. 
    (1) Manifold learning: projection of high-dimensional observations onto latent spaces; 
    (2) Learning manifold-informed ROMs (i.e., learning the solution operator in the latent space); 
    (3) Solution of the pre-image problem: construction of the map between the learned latent-space dynamics and the spatio-temporal dynamics (i.e., reconstruction of the solution operator of the unavailable PDE).}
\label{fig:overview_framework}
\end{figure}

A schematic of the proposed framework is shown in Fig.~\ref{fig:overview_framework}.
Assuming a given set of $M$ spatio-temporal snapshots of a field, say 
$\{\mathbf{x}_m\}_{m=1,\ldots, M}$, the aim is to learn a solution operator on a low-dimensional latent space using manifold-informed ROMs, and then to reconstruct—in a fully data-driven, \textit{Equation-free} manner—an approximation of the apparent solution operator in the high-dimensional input space.

Each snapshot at time $t_m$ may contain, for example, the density across the spatial domain; i.e., $\mathbf{x}_m = [\rho(t_m, x_i, y_j)]^\top \in \mathbb{R}^{N}$, where $i=1,\ldots,N_x$, $j=1,\ldots,N_y$, and $N=N_x \times N_y$ is the number of points in the computational grid. 
Let us represent the dataset as 
\begin{equation}
\label{eq:snapshot}
    \mathbf{X} = [\mathbf{x}_1, \mathbf{x}_2,\dots,\mathbf{x}_M] \in \mathbb{R}^{N \times M},
\end{equation}
with columns \(\mathbf{x}_k \in \mathbb{R}^{N}\) being the density fields. 
The key assumption of the methodology is that these data points lie on a smooth $d$-dimensional manifold \(\mathcal{M}\subset\mathbb{R}^N\), and we seek a data‐driven mapping
\begin{equation}
\varPhi:\mathbb{R}^N\;\longrightarrow\;\mathbb{R}^d,
\quad d\ll N, 
\label{eq:HD2LDmap}
\end{equation}
such that \(\mathbf{y}_m=\varPhi(\mathbf{x}_m)\in \mathbb{R}^d\) captures the dominant coherent structures and emergent dynamics in the latent space, where $d$ approximates the intrinsic dimension of $\mathcal{M}$. The construction of this mapping via manifold learning and in particular DIffusion Maps (POD is also used for comparison purposes) constitutes the first step of the proposed framework.

Once $\varPhi$ is obtained, one can learn surrogate ROMs of the high-dimensional dynamics embedded in the latent space, i.e., learn the solution operator $\mathcal{S}_T$ of the reduced-order dynamics as:
\begin{equation}
\boldsymbol{y}(t+T)=\mathcal{S}_{T}[\boldsymbol{y}(t),\dots,\boldsymbol{y}(t-(l-1)\Delta t)],
\end{equation}
via machine-learning surrogates, where $T$ is the time horizon of the prediction, $l$ is the number of delays, $\Delta t$ is the sampling interval of the original data. Here, we employed SINDy and standard linear MVAR models with delayed coordinates to both demonstrate the framework and provide an indicative comparison of the two approaches.

In the final step, we solve the so-called pre-image problem, i.e., we seek a `lifting operator' $\mathcal{L}$ to reconstruct “on demand” the apparent solution operator $\mathcal{S}^{X}_T$ of the high-dimensional spatio-temporal dynamics from the latent dynamics. This allows us to construct the map on the reconstructed space as:
\begin{equation}
\hat{\boldsymbol{x}}(t+T)=\mathcal{L}\big(\mathcal{S}_{T}[\boldsymbol{y}(t),\dots,\boldsymbol{y}(t-(l-1)\Delta t]\big)
:=\mathcal{S}^{X}_T=\mathcal{L}\circ \mathcal{S}_{T}, 
\end{equation}
thus bypassing the need, as in the Equation-free framework~\cite{kevrekidis2003equation}, to learn explicitly the high-dimensional map itself. Importantly, we theoretically show that the linear POD lifting operators, as well as the $k$-NN algorithm used to construct the lifting operator for DMs, conserve the mass in the input/high-dimensional space. This property is crucial in a wide range of complex systems governed by conservation laws—such as in crowd dynamics and CFD.

Our approach yields compact, computationally efficient surrogates that are non-intrusive (they do not require any explicit knowledge of the governing PDE or parts of it) and are well suited to the Equation-free workflow~\cite{kevrekidis2003equation}: fast prediction in latent space followed by data-driven reconstruction to recover high-fidelity solutions. 

\subsection{Manifold Learning and the Pre-image/Reconstruction Problem}
\label{subsec:pca}
In the first step, we determine the map $\varPhi$ from the high-dimensional space to the latent space in Eq.~\eqref{eq:HD2LDmap} via manifold learning. As discussed, here, for comparison purposes, we consider both POD, and Diffusion Maps (DMs), equipped with a $k-NN$ decoder. Within the POD context, one assumes a linear map $\varPhi(\boldsymbol{x}_m) = \mathbf{W}^\top\boldsymbol{x}_m$, with $\mathbf{W} \in \mathbb{R}^{N\times d}$ containing the leading POD modes computed from the covariance matrix of $\mathbf{X}$ over all $t_m$, yielding a projection that maximizes variance in the $L^2$ sense. Nonlinear manifold learning methods (e.g., kernel-PCA, Laplacian Eigenmaps~\cite{belkin2003laplacian,belkin2008towards}, Diffusion Maps~\cite{coifman2005geometric,coifman2006geometric,nadler2006diffusion,coifman2008diffusion,evangelou2022double,evangelou2022parameter}, and Autoencoders~\cite{vlachas2022multiscale,romor2023non}) generalize this approach by constructing a nonlinear map $\varPhi$ that preserves intrinsic geometric properties of the data. Here, we used both POD and Diffusion Maps. 

When using POD, the lifting task is trivial: the full state is reconstructed as a linear combination of POD basis modes weighted by the latent coefficients. In the case of nonlinear manifold learning algorithms, such as Diffusion Maps, we aim to learn an approximation of the inverse map (the lifting operator/decoder):
\begin{equation}
    \mathcal{L}
    \approx \varPhi^{-1}:\mathbb{R}^d\longrightarrow \mathbb{R}^N, \quad \varPhi^{-1}(\mathbf{Y}) \rightarrow \mathbf{X},
\end{equation}
for new samples on the manifold, say $\boldsymbol{y}^* \notin \varPhi(\mathbf{X})$, \textit{i.e.}, those produced by the MVAR models. This inverse problem is referred to as the ``out-of-sample extension pre-image'' problem, which is ill-posed. Compactly, this can be written as
\begin{equation}
\varPhi(\mathcal{L}(\mathbf{y}^*)) \approx \mathbf{y}^*.
\label{liftOP}
\end{equation}

\subsubsection{POD-based reconstruction conserves mass.}
\label{subsubsec:POD}

Considering a density field $\rho(t,x,y)$ as a paradigm, in POD, we decompose the fluctuations of the density field $\rho(t,x,y)$ with respect to the temporal mean $\overline{\rho}(x,y)$ as
\begin{equation}
\label{eq:POD_def}
\rho'(t,x, y) = \rho(t,x, y) - \overline{\rho}(x, y) = \sum_{i=1}^{\infty} y_i(t)\, \boldsymbol{\varphi}_i(x, y),
\end{equation}
where the spatial modes $\boldsymbol{\varphi}_i(x,y)$ are mutually orthogonal. The discrete modes $\boldsymbol{\varphi}_i$ are obtained via the method of snapshots~\cite{Sirovich}, namely by solving the eigenvalue problem for the covariance matrix $\mathbf{X}^\top\mathbf{X} \in \mathbb{R}^{M \times M}$:
\begin{equation} 
\label{eq:eigprobQQt}
\mathbf{X}^\top\mathbf{X} \boldsymbol{\psi}_i = \lambda_i \boldsymbol{\psi}_i, \quad i=1, \ldots, M,
\end{equation}
where $\boldsymbol{\varphi}_i  = \mathbf{X} \boldsymbol{\psi}_i / \sqrt{\lambda_i}$ is the $i$th mode, and $\lambda_i$, $\boldsymbol{\psi}_i$ are the corresponding eigenvalue and eigenvector of the covariance matrix, sorted in descending order such that $\lambda_1 > \ldots > \lambda_M$. Here, $\mathbf{X} \in \mathbb{R}^{N \times M}$ is the snapshot matrix. By retaining the leading $d \ll M$ modes, we construct the reduced-order POD basis parameterizing the manifold, which is subsequently used to project the high-dimensional density field onto the temporal coordinates $y_i(t)$, for $i=1, \dots, d$.

Using the POD basis, we define the mapping $\varPhi$ in Eq.~\eqref{eq:HD2LDmap} as $\mathbf{y} = \varPhi(\mathbf{x}) = \mathbf{W}^\top\mathbf{x}$, where $\mathbf{W} = [\boldsymbol{\varphi}_1,\ldots,\boldsymbol{\varphi}_d] \in \mathbb{R}^{N\times d}$, thus defining the latent variables as the leading POD coefficients $\mathbf{y} = [y_1,\ldots,y_d]^\top$. Note that the POD basis provides a parameterization of the manifold for both seen points $\mathbf{x}_m$ and unseen ones $\mathbf{x}^*_m$. Hence, it can be employed for any point in the high-dimensional space, providing the projection subsequently used for ROM construction via the MVAR model.

We now prove that the reconstructed solution $\hat{\mathbf{x}}$ via POD preserves the total mass in each snapshot.
\begin{prop}
\label{prop1}
Let the dataset be 
\begin{equation}
    \mathbf{X} = [\mathbf{x}_1, \mathbf{x}_2, \dots, \mathbf{x}_M] \in \mathbb{R}^{N \times M},
\end{equation}
with columns $\mathbf{x}_k \in \mathbb{R}^{N}$ representing normalized to sum to 1 density fields. If the total mass of the original data is preserved, i.e.,
\begin{align}
    \begin{bmatrix}
      \sum_{i=1}^{N} x_{i1} & \sum_{i=1}^{N} x_{i2} & \dots & \sum_{i=1}^{N} x_{iM}
    \end{bmatrix}
    = \mathbf{1}_M^\top,
\end{align}
then the mass of the \textit{reconstructed/decoded} field computed by POD as
\begin{equation}
\hat{\mathbf{X}} = \mathbf{U}_d 
\mathbf{U}_d^\top \tilde{\mathbf{X}} +  \mathbf{X} (\mathbf{I}_M - \mathbf{H}), \quad \mathbf{H} = \mathbf{I}_M - \frac{1}{M} \mathbf{1}_M \mathbf{1}_M^\top,
\label{eq:recon_theorem}
\end{equation}
where $\mathbf{U}_d = [\mathbf{u}_1,\mathbf{u}_2,\dots, \mathbf{u}_d] \in \mathbb{R}^{N \times d}$ is the orthonormal basis formed by the first $d$ left singular vectors of the SVD of the centered matrix $\tilde{\mathbf{X}} = \mathbf{X} \mathbf{H}$, is also preserved, i.e.,
\begin{align}
    \begin{bmatrix}
      \sum_{i=1}^{N} \hat{x}_{i1} & \sum_{i=1}^{N} \hat{x}_{i2} & \dots & \sum_{i=1}^{N} \hat{x}_{iM}
    \end{bmatrix}
    = \mathbf{1}_M^\top.
\end{align}
\end{prop}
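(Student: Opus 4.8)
The plan is to compute the row vector of column sums of the reconstruction by left-multiplying Eq.~\eqref{eq:recon_theorem} by $\mathbf{1}_N^\top$ and to show it collapses to $\mathbf{1}_M^\top$. The hypothesis that each original snapshot sums to one reads $\mathbf{1}_N^\top \mathbf{X} = \mathbf{1}_M^\top$, so everything reduces to tracking how the two summands defining $\hat{\mathbf{X}}$ act on $\mathbf{1}_N$ from the left. I would treat the two terms separately and then add their contributions.

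First I would handle the mean-restoring term $\mathbf{X}(\mathbf{I}_M - \mathbf{H})$. Observing that $\mathbf{I}_M - \mathbf{H} = \tfrac{1}{M}\mathbf{1}_M\mathbf{1}_M^\top$, I compute $\mathbf{1}_N^\top \mathbf{X}(\mathbf{I}_M - \mathbf{H}) = \mathbf{1}_M^\top \cdot \tfrac{1}{M}\mathbf{1}_M\mathbf{1}_M^\top$, and since $\mathbf{1}_M^\top \mathbf{1}_M = M$, this term equals exactly $\mathbf{1}_M^\top$. Thus the affine term alone already produces the desired column sums, and it remains only to show that the projected-fluctuation term contributes nothing.

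For the projection term $\mathbf{U}_d \mathbf{U}_d^\top \tilde{\mathbf{X}}$, the key observation is that centering annihilates the mass direction. Using $\mathbf{H}\mathbf{1}_M = \mathbf{0}$ (equivalently $\mathbf{1}_M^\top \mathbf{H} = \mathbf{0}_M^\top$, by symmetry of $\mathbf{H}$), I obtain $\mathbf{1}_N^\top \tilde{\mathbf{X}} = \mathbf{1}_N^\top \mathbf{X}\mathbf{H} = \mathbf{1}_M^\top \mathbf{H} = \mathbf{0}_M^\top$, i.e.\ every centered column is orthogonal to $\mathbf{1}_N$. The crucial step is then to transfer this orthogonality to the retained left singular vectors: from the SVD relation $\tilde{\mathbf{X}}\mathbf{v}_i = \sigma_i \mathbf{u}_i$ with $\sigma_i > 0$, each retained mode $\mathbf{u}_i = \sigma_i^{-1}\tilde{\mathbf{X}}\mathbf{v}_i$ lies in the column space of $\tilde{\mathbf{X}}$, so $\mathbf{1}_N^\top \mathbf{u}_i = \sigma_i^{-1}(\mathbf{1}_N^\top\tilde{\mathbf{X}})\mathbf{v}_i = 0$, giving $\mathbf{1}_N^\top \mathbf{U}_d = \mathbf{0}_d^\top$. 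Consequently $\mathbf{1}_N^\top \mathbf{U}_d \mathbf{U}_d^\top \tilde{\mathbf{X}} = \mathbf{0}_M^\top$, and adding the two contributions yields $\mathbf{1}_N^\top \hat{\mathbf{X}} = \mathbf{1}_M^\top$, as claimed.

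I expect the main obstacle to be this transfer of orthogonality to the singular vectors, which implicitly requires that the $d$ retained modes all correspond to strictly positive singular values, i.e.\ $d \le \mathrm{rank}(\tilde{\mathbf{X}})$. If $d$ exceeded the rank, the extra $\mathbf{u}_i$ would be arbitrary completions of an orthonormal basis and need not be orthogonal to $\mathbf{1}_N$; I would note that any meaningful ROM takes $d$ within the rank, so this degenerate case is excluded. Apart from that caveat, the argument is purely linear-algebraic and needs no nontrivial estimates.
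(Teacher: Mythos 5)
Your proof is correct and follows essentially the same route as the paper's: both left-multiply the reconstruction by $\mathbf{1}_N^\top$, use $\mathbf{1}_N^\top \tilde{\mathbf{X}} = \mathbf{1}_M^\top \mathbf{H} = \mathbf{0}_M^\top$ to annihilate the projected term via $\mathbf{1}_N^\top \mathbf{U}_d = \mathbf{0}_d^\top$, and recover $\mathbf{1}_M^\top$ from the mean-restoring term using $\mathbf{1}_M^\top \mathbf{1}_M = M$. Your explicit transfer of orthogonality through $\mathbf{u}_i = \sigma_i^{-1}\tilde{\mathbf{X}}\mathbf{v}_i$, together with the caveat that all retained modes must correspond to strictly positive singular values (i.e.\ $d \le \mathrm{rank}(\tilde{\mathbf{X}})$), merely makes precise what the paper states implicitly through its condition $d = 1,2,\dots,r$.
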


\begin{proof}
Since the sum of each column of $\mathbf{X}$ is 1 by hypothesis, we have
\[
\mathbf{1}_N^\top \mathbf{X} = \mathbf{1}_M^\top.
\]
Then, for the centered matrix $\tilde{\mathbf{X}} = \mathbf{X} \mathbf{H}$, we obtain
\begin{align}
\mathbf{1}_N^\top \tilde{\mathbf{X}} = \mathbf{1}_N^\top \mathbf{X} \mathbf{H} = \mathbf{1}_M^\top \mathbf{H} = \mathbf{1}_M^\top - \frac{1}{M} \mathbf{1}_M^\top \mathbf{1}_M \mathbf{1}_M^\top = \mathbf{1}_M^\top - \mathbf{1}_M^\top = \mathbf{0}_M^\top.
\end{align}
This implies that $\mathbf{1}_{N}$ is orthogonal to the column space of $\tilde{\mathbf{X}}$, and therefore to the left singular vectors of its SVD, which provide an orthogonal basis for that column space. Thus,
\begin{equation}
    \mathbf{1}_{N}^\top \mathbf{U}_d = \boldsymbol{0}^\top_d,
\end{equation}
where $d=1,2,\dots, r$, and $r$ is the rank of $\mathbf{X}$. Multiplying Eq.~\eqref{eq:recon_theorem} by $\mathbf{1}_{N}^\top$ yields
\begin{equation}
\mathbf{1}_N^\top \hat{\mathbf{X}} = \mathbf{1}_N^\top \mathbf{U}_d \mathbf{U}_d^\top \tilde{\mathbf{X}} + \mathbf{1}_N^\top \mathbf{X} (\mathbf{I}_M - \mathbf{H})  = \mathbf{0}_M^\top + \frac{1}{M} \mathbf{1}_N^\top \mathbf{X} \mathbf{1}_M \mathbf{1}_M^\top = \mathbf{1}_M^\top,
\end{equation}
since $\mathbf{1}_M^\top \mathbf{1}_M = M$. We therefore obtain
\begin{align}
    \mathbf{1}_{N}^\top \hat{\mathbf{X}} =
    \begin{bmatrix}
    \sum_{i=1}^{N} \hat{x}_{i1} &  \sum_{i=1}^{N} \hat{x}_{i2} & \dots & \sum_{i=1}^{N} \hat{x}_{iM}  
    \end{bmatrix} = \mathbf{1}^\top_{M},
\end{align}
hence showing that the total mass in each snapshot is preserved under POD reconstruction.
\end{proof}

The above result follows from the well-known property of POD to preserve symmetries~\cite{aubry1993preserving}.
At this point, we note that mass conservation and pointwise positivity are distinct properties. While POD-based reconstruction preserves total mass, it does not guarantee non-negativity, and small negative values may appear despite positive input data. Since the mean is added back, such values typically occur in low-density regions. For visualization purposes only, one can set negative entries to zero and renormalize the field by the mass of its nonnegative part, thereby enforcing positivity while preserving total mass. This does not influence the training process, since learning is performed entirely in the latent space and the reconstruction is applied only a posteriori.

\subsubsection{Encoding with Diffusion Maps and Decoding with the k-NN algorithm}
\label{subsubsec:DMs}

The purpose of DMs is to construct a nonlinear mapping from a high-dimensional space to a low-dimensional subspace while preserving the intrinsic geometry of the underlying manifold. We follow the theoretical formulation and numerical implementation of DMs presented in earlier works \cite{dsilva2018parsimonious,holiday2019manifold,Patsatzis_2023,chin2024enabling,gallos2024data}. 

Assume that the data lie on a smooth, low-dimensional manifold $\mathcal{M} \subset \mathbb{R}^N$. Diffusion Maps then aim to obtain low-dimensional embeddings $\mathbf{y} \in \mathbb{R}^d$, with $d \ll N$, collected in the matrix $\mathbf{Y} \in \mathbb{R}^{M \times d}$, such that Euclidean distances between points $\mathbf{y}$ approximate the diffusion distances between the original points \cite{nadler2006diffusion}.

The algorithm begins by defining a similarity measure between pairs of data points $\boldsymbol{x}_i, \boldsymbol{x}_j \in \mathbf{X}$, $\forall i,j=1,\ldots,M$, in the high-dimensional space. Using the Euclidean norm $d_{ij} = \|\boldsymbol{x}_i - \boldsymbol{x}_j\|$, we construct a Gaussian kernel $k(\boldsymbol{x}_i, \boldsymbol{x}_j)$, which defines the affinity matrix:
\begin{equation}
	\mathbf{A} = [a_{ij}] = [k(\boldsymbol{x}_i, \boldsymbol{x}_j)] = \exp\left(-\frac{\|\boldsymbol{x}_i - \boldsymbol{x}_j\|^2}{\epsilon^2}\right),
	\label{eq:affinity_matrix}
\end{equation}
where $\epsilon$ controls the local neighborhood size in the high-dimensional space. In our implementation, we set $\epsilon = \mathrm{median}(d_{ij})$, which promotes a relatively large neighborhood. Other strategies for selecting $\epsilon$ exist \cite{singer2009detecting,gallos2021construction}.

Next, the $M \times M$ Markov transition matrix $\mathbf{M}$ is formed by row-normalizing the affinity matrix:
\begin{equation}
	\mathbf{M} = \mathbf{D}^{-1} \mathbf{A}, \quad \text{with} \quad \mathbf{D} = \operatorname{diag}\left(\sum_{j=1}^{M} a_{ij}\right).
	\label{eq:Markovian_matrix}
\end{equation}
Each entry $\mu_{ij}$ of $\mathbf{M}$ represents the probability of moving from point $i$ to point $j$ in the high-dimensional space:
\begin{equation}
	\mu_{ij} = \operatorname{Prob}\left(X_{t+1} = \boldsymbol{x}_j \mid X_t = \boldsymbol{x}_i\right).
\end{equation}
Equivalently, using the kernel,
\begin{equation}
	\mu_{ij} = \frac{k(\boldsymbol{x}_i, \boldsymbol{x}_j)}{\operatorname{deg}(\boldsymbol{x}_i)}, \quad \text{with} \quad \operatorname{deg}(\boldsymbol{x}_i) = \sum_{j=1}^{M} k(\boldsymbol{x}_i, \boldsymbol{x}_j),
\end{equation}
recovering Eq.~\eqref{eq:Markovian_matrix}.

The transition matrix $\mathbf{M}$ is similar to the symmetric, positive-definite matrix $\hat{\mathbf{M}} = \mathbf{D}^{-1/2} \mathbf{A} \mathbf{D}^{-1/2}$, which allows an eigendecomposition
\begin{equation}
	\mathbf{M} = \sum_{i=1}^{M} \lambda_i \mathbf{w}_i \mathbf{u}_i^\top,
\end{equation}
where $\lambda_i \in \mathbb{R}$ are eigenvalues and $\mathbf{w}_i, \mathbf{u}_i \in \mathbb{R}^M$ are left and right eigenvectors, satisfying $\langle \mathbf{w}_i, \mathbf{u}_j \rangle = \delta_i^j$. The right eigenvectors $\mathbf{u}_i$ span an orthonormal basis for the low-dimensional subspace $\mathbb{R}^d$, and the best $d$-dimensional approximation is obtained from the $d$ largest eigenvalues.

The standard DMs embedding maps each snapshot $\boldsymbol{x}_m$ to
\begin{equation}
	\mathbf{y}_m = (\lambda_1 u_{1,m}, \ldots, \lambda_d u_{d,m}), \quad m = 1, \ldots, M,
\end{equation}
where $u_{i,m}$ denotes the $m$-th component of the $i$-th right eigenvector corresponding to the $i$-th largest non-trivial eigenvalue $\lambda_i$. This embedding approximates the diffusion distance in the high-dimensional space by Euclidean distance in the embedded space:
\begin{equation}
	D_t^2(\boldsymbol{x}_i, \boldsymbol{x}_j) = \left\| \mu_t(\boldsymbol{x}_i, \cdot) - \mu_t(\boldsymbol{x}_j, \cdot) \right\|_{L_2, 1/\operatorname{deg}}^2 = \sum_{k=1}^{M} \frac{\left(\mu_t(\boldsymbol{x}_i, \boldsymbol{x}_k) - \mu_t(\boldsymbol{x}_j, \boldsymbol{x}_k)\right)^2}{\operatorname{deg}(\boldsymbol{x}_k)},
\end{equation}
with $\mu_t(\mathbf{x}_i, \cdot)$ the $i$-th row of $\mathbf{M}^t$. In our computations, we use $t = 1$.

In practice, the embedded dimension $d$ is determined by the
spectral gap of the eigenvalue ratio of the transition matrix $\textbf{M}$,
assuming that the first $d$ leading eigenvalues are adequate to provide a good approximation of the diffusion distance between all pairs of points.
The resulting DMs embedding is constructed from the retained eigenpairs $\{\lambda_i, \mathbf{u}_i\}_{i=1}^d$. The restriction operator (encoder) $\varPhi$, evaluated on a data point $\mathbf{x}_m$, is
\begin{equation}
	\varPhi(\mathbf{x}_m) = (\lambda_1 u_{1,m}, \ldots, \lambda_d u_{d,m}) = \mathbf{y}_m \in \mathbb{R}^d, \quad m = 1, \ldots, M.
\end{equation}
For new, unseen points, we employ the Nyström method \cite{nystrom1929uber,coifman2006geometric,chiavazzo2014reduced,evangelou2022double,Patsatzis_2023}.

\paragraph{Decoding with $k$-NN}
Here, for the $k$-NN approach for the solution for the pre-image problem \cite{chin2024enabling}, we prove the following proposition.
\begin{prop}
Let the assumptions for the data matrix $\mathbf{X}$ in Proposition~\ref{prop1} hold, i.e., columns $\mathbf{x}_k \in \mathbb{R}^N$ are normalized to sum to 1. Then the solution of the pre-image problem using the k-NN algorithm with convex interpolation preserves both the total mass and positivity of the reconstructed state.
\end{prop}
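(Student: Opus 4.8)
The plan is to exploit exactly the structural feature that made Proposition~\ref{prop1} work: mass preservation is inherited whenever the reconstruction is an affine combination of unit-mass training snapshots whose coefficients sum to one. First I would make the $k$-NN decoder of \cite{chin2024enabling} explicit. Given a new latent point $\mathbf{y}^* = \mathcal{S}_T[\mathbf{y}(t)]$ produced by the MVAR model, the decoder locates its $k$ nearest neighbors $\{\mathbf{y}_{n_1}, \ldots, \mathbf{y}_{n_k}\}$ among the embedded training data and determines convex interpolation weights $\mathbf{w} = (w_1, \ldots, w_k)$ by solving the constrained least-squares problem
\[
\min_{\mathbf{w}} \Big\| \mathbf{y}^* - \sum_{l=1}^{k} w_l\, \mathbf{y}_{n_l} \Big\|^2 \quad \text{subject to} \quad w_l \ge 0, \ \sum_{l=1}^{k} w_l = 1,
\]
and then lifts back by reusing these weights on the corresponding high-dimensional snapshots, $\hat{\mathbf{x}}^* = \mathcal{L}(\mathbf{y}^*) = \sum_{l=1}^{k} w_l\, \mathbf{x}_{n_l}$.

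The second step is the one-line computation at the heart of the result. Applying $\mathbf{1}_N^\top$ to the reconstruction and using the hypothesis $\mathbf{1}_N^\top \mathbf{x}_{n_l} = 1$ for every training column gives
\[
\mathbf{1}_N^\top \hat{\mathbf{x}}^* = \sum_{l=1}^{k} w_l\, \mathbf{1}_N^\top \mathbf{x}_{n_l} = \sum_{l=1}^{k} w_l = 1,
\]
so the reconstructed field again sums to one. I would emphasize that this identity holds \emph{independently} of how well the convex combination approximates $\mathbf{y}^*$ in the latent space: the only facts used are that each neighbor carries unit mass and that the weights sum to one. The nonnegativity $w_l \ge 0$ is in fact not needed for mass conservation---the affine constraint $\sum_l w_l = 1$ alone suffices---but it is what guarantees that the lifted field remains a genuine (nonnegative) density, which I would record as a corollary.

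The only point requiring care, and the closest thing to an obstacle, is justifying that the weight-normalization constraint is genuinely active in the decoder rather than an approximate by-product. Here the convex-interpolation formulation of \cite{chin2024enabling} is essential: because $\sum_l w_l = 1$ is imposed as a hard equality constraint in the quadratic program, the conclusion is exact rather than approximate, in contrast with an unconstrained regression decoder whose weights would only sum to one up to fitting error. I would therefore state the proposition under the explicit assumption that the decoder uses the constrained convex formulation, and close by remarking that this places the DMs lifting operator on the same mass-conserving footing as the linear POD operator of Proposition~\ref{prop1}, with the convexity constraint $\sum_l w_l = 1$ playing the role that the orthogonality $\mathbf{1}_N^\top \mathbf{U}_d = \mathbf{0}_d^\top$ played there.
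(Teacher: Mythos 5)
Your proposal is correct and follows essentially the same argument as the paper: the $k$-NN decoder lifts by a convex combination $\sum_l w_l \mathbf{x}_{n_l}$ of unit-mass training snapshots, so applying $\mathbf{1}_N^\top$ and using $\sum_l w_l = 1$ gives mass preservation exactly. Your added observations---that the affine constraint alone suffices for mass conservation while nonnegativity guarantees the lifted field is a genuine density, and that the hard equality constraint makes the result exact rather than approximate---are accurate refinements the paper leaves implicit, but the core computation is identical.
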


\begin{proof}
Construct the lifting operator (decoder) $\mathcal{L}$ using k-NN as (see in \cite{chin2024enabling}):
\begin{equation}
    \mathbf{x}^* = \mathcal{L}(\mathbf{y}^*) = \sum_{k=1}^K b_k \mathbf{x}_{S(k)},
\end{equation}
where $\mathbf{x}_{S(k)} \in \mathbf{X}$ are the nearest neighbors in the high-dimensional space corresponding to $\mathbf{y}^*$ in latent space. The convex weights $b_k$ satisfy $\sum_{k=1}^K b_k = 1$ and $b_k \in [0,1]$.  

Since $\mathbf{x}^*$ is a convex combination of normalized columns of $\mathbf{X}$, we have
\begin{equation}
    \mathbf{1}^\top \mathbf{x}^* = \sum_{k=1}^K b_k \mathbf{1}^\top \mathbf{x}_{S(k)} = \sum_{k=1}^K b_k = 1.
\end{equation}
Hence, the reconstructed solution preserves the total mass. Furthermore, a weighted sum of this form—nonnegative weights that sum to one—produces a point $\mathbf{x}^*$ that lies in the convex hull of its neighbors and is positive if its neighbors are positive.
\end{proof}

\subsection{Manifold-Informed ROMs}
Manifold-informed ROMs (e.g., using multivariate autoregressive models (MVARs), neural networks, Gaussian Processes, SINDy, or NOs) can be used without changing the proposed framework, as shown in following Section~\ref{subsec:SINDy}.
Here, we employed two approaches: vanilla linear MVARs with multiple delayed coordinates and Sparse Identification of Nonlinear Dynamics (SINDy) \cite{brunton2016discovering,Rudy2017} with a single delay. For both approaches, we optimized embedding parameters and model hyperparameters (e.g., model order for MVAR,  sparsity threshold and library for SINDy) on the training sets, and selected the SINDy candidate library via cross-validation to balance predictive accuracy and model parsimony. More details are given in \ref{app:results_insights} and \ref{app:results_insights_fluid}.

\subsubsection{Manifold-Informed linear MVARs}
\label{subsec:multivariate}

As shown in \cite{papaioannou2022time}, MVARs in delay-coordinate latent spaces \cite{shvartsman1998low,dylewsky2022principal,kemeth2022learning,axaas2023model,gallos2024data,patil2025separation} can outperform nonlinear machine learning surrogates when the latent manifold is well parameterized and the dynamics evolve around a single nonlinear basin of attraction. In this regime, the latent dynamics are approximately linearizable, and MVAR models capture the evolution with high fidelity. Training MVARs requires only linear regression, offering a computationally efficient alternative to deep nonlinear models such as neural networks, LSTMs, or GRUs. Furthermore, the learned MVAR coefficients provide interpretability of the mode interactions, and the framework naturally supports uncertainty quantification. 

For one-step-ahead prediction $(T=\Delta t)$, an MVAR model of order $l$ reads:
\begin{equation}
\mathbf{y}(t + \Delta t) = \sum_{k=0}^{l-1} \mathbf{C}_k \, \mathbf{y}(t - k\Delta t),
\label{eq:mar_latent_y}
\end{equation}
where $\mathbf{C}_k \in \mathbb{R}^{d \times d}$ are the autoregressive coefficient matrices at lag $k$. In compact form,
\begin{equation}
\mathbf{y}(t + \Delta t) = \mathbf{C} \, \mathbf{r}(t),
\label{eq:mar_compact_y}
\end{equation}
with $\mathbf{C} \in \mathbb{R}^{d \times (d \cdot l)}$ concatenating the $\mathbf{C}_k$, and the regressor vector
\begin{equation}
\label{eq:regressor}
\mathbf{r} = \begin{bmatrix} 
\mathbf{y}(t)^\top & \mathbf{y}(t - \Delta t)^\top & \cdots & \mathbf{y}(t - (l-1)\Delta t)^\top
\end{bmatrix}^\top \in \mathbb{R}^{d \cdot l}.
\end{equation}

To estimate the MVAR parameters, we minimize the total squared prediction error over all sequences corresponding to different initial conditions. Let $\{\mathbf{y}^{(n)}(t)\}_{t=0}^{\tau}$ denote the latent sequence for the $n$-th initial condition, $n = 1,\dots, N_{ic}$, and $\tau = t_f - t_0$ the sequence length. The corresponding regressor is
\[
\mathbf{r}^{(n)} = \begin{bmatrix} 
\mathbf{y}^{(n)}(t)^\top & \mathbf{y}^{(n)}(t - \Delta t)^\top & \cdots & \mathbf{y}^{(n)}(t - (l-1)\Delta t)^\top
\end{bmatrix}^\top.
\]

The Ordinary Least Squares (OLS) solution \cite{Lutkepohl2005} minimizes
\begin{equation}
\min_{\mathbf{C}} \sum_{n = 1}^{N_{ic}} \sum_{t = (l-1)\Delta t}^{\tau - \Delta t} \left\| \mathbf{y}^{(n)}(t + \Delta t) - \mathbf{C} \, \mathbf{r}^{(n)} \right\|_2^2,
\label{eq:mar_optimization_latent_multi_y}
\end{equation}
yielding the closed-form solution
\begin{equation}
\mathbf{C}^\top = \left( \mathbf{R}^\top \mathbf{R} \right)^{-1} \mathbf{R}^\top \mathbf{Y},
\label{eq:ols_solution_y}
\end{equation}
where $\mathbf{R} \in \mathbb{R}^{M \times (d \cdot l)}$ stacks all regressor vectors and $\mathbf{Y} \in \mathbb{R}^{M \times d}$ contains the target latent states. Temporal stationarity of the latent states was verified via the Augmented Dickey–Fuller (ADF) test \cite{DickeyFuller1979}.

The optimal lag order $l$ is selected using the Bayesian Information Criterion (BIC) \cite{Lutkepohl2005}:
\begin{equation}
\text{BIC}(l) = -2 \cdot \log\!\left( \hat{\mathcal{L}}_h(l) \right) + \log(N_\tau) \cdot T_c,
\label{eq:bic_y}
\end{equation}
where $\hat{\mathcal{L}}_h(l)$ is the maximum likelihood for lag $l$ and $T_c = l \cdot d^2$ is the number of autoregressive coefficients. The optimal lag minimizes $\text{BIC}(l)$. Once trained, the MVAR model provides a compact, interpretable representation of the latent temporal dynamics, suitable for forecasting, simulation, and uncertainty quantification.

\subsubsection{Manifold-Informed SINDy}
\label{subsec:SINDy}

SINDy seeks a parsimonious description of the governing dynamics by representing the time evolution of the latent variables as a sparse linear combination of candidate nonlinear functions drawn from a predefined library \cite{brunton2016discovering,Champion_SINDy}. Within the present framework, SINDy is applied directly in the learned latent space, thereby avoiding the curse of dimensionality associated with discovering nonlinear PDEs in the original high-dimensional space (see also the discussion in \cite{Rudy2017}).

Let $\mathbf{y}(t) \in \mathbb{R}^d$ denote the latent coordinates obtained via POD or DMs. Within the discrete map context, for one-step-ahead predictions, SINDy seeks to approximate the latent dynamics as
\begin{equation}
\mathbf{y}(t+\Delta t) = \Theta(\mathbf{r}) \, \Xi.
\label{eq:sindy_model}
\end{equation}
where $\Theta(\mathbf{r}) \in \mathbb{R}^{p}$ is a library of $p$ candidate nonlinear functions and $\Xi \in \mathbb{R}^{p \times d}$ is a sparse coefficient matrix whose nonzero entries identify the active terms governing the dynamics. This continuous-time formulation provides conceptual motivation for the method.

Typical libraries include polynomial functions up to a prescribed order, Fourier basis functions, or combinations thereof, optionally augmented with cross terms. In the numerical experiments performed in this work, we consider polynomial libraries of order one, two and three, denoted by $p(1)$, $p(2)$ and $p(3)$, respectively, as well as Fourier libraries, denoted by $f$. When explicitly excluding cross-coupling polynomial terms, we use the notation $p_{\mathrm{nc}}(\cdot)$. When a Fourier library is employed, $\Theta(\cdot)$ consists of sine and cosine functions of the latent coordinates and their delays, enabling the identification of oscillatory and advective structures in the latent dynamics.

The sparse coefficient matrix $\Xi$ is identified via sparse regression using sequentially thresholded least squares or equivalent $\ell_1$-regularized optimization procedures \cite{brunton2016discovering}. As in the MVAR case, SINDy operates entirely in the latent space and is therefore fully compatible with the proposed PDE-free, manifold-informed framework. Both families of ROMs are systematically evaluated and compared in Section~\ref{sec:results}.

\subsection{Assessing the Reconstruction Error}
\label{subsec:recon_err}

Once latent coordinates are obtained (via POD or DMs), a high-dimensional state $\rho \in \mathbb{R}^{N}$ can be projected into the latent space and then lifted back to the high-dimensional space to obtain a reconstructed field $\hat{\rho} \in \mathbb{R}^{N}$. The reconstruction quality is evaluated against the ground-truth $\rho$ using the absolute and relative \(\ell_2\) errors:
\begin{equation}
\varepsilon_2(t) = \| \rho(t,\cdot,\cdot) - \hat{\rho}(t,\cdot,\cdot) \|_2, 
\qquad
\varepsilon^r_2(t) = \frac{\| \rho(t,\cdot,\cdot) - \hat{\rho}(t,\cdot,\cdot) \|_2}{\| \rho(t,\cdot,\cdot) \|_2},
\label{eq:L2_errors}
\end{equation}
as well as the \emph{Wasserstein-$1$ distance} ($W_1$) \cite{villani2009optimal,piccoli2014generalized,cristiani2014evolution,piccoli2016properties} defined as
\begin{equation}
W_1(t) =  \inf_{\gamma \in \Gamma(\rho, \hat{\rho})} \int_{\Omega \times \Omega} \|x - y\|_2 \, d\gamma(x,y),
\label{eq:wasserstein}
\end{equation}
where $\Gamma(\rho, \hat{\rho})$ is the set of joint distributions (couplings) with marginals $\rho$ and $\hat{\rho}$, and $\|x-y\|_2$ is the Euclidean distance.  

The Wasserstein distance is particularly suitable for spatio-temporal dynamics as it accounts for both magnitude and spatial rearrangements: small translations of a density pattern yield small $W_1$, unlike \(\ell_2\) or \(\ell_\infty\) norms that can overestimate error due to local misalignments. Here, the computation of the Wasserstein distances is performed in MATLAB using a simplified, discrete approach by summing absolute differences between sorted samples ~\cite{wesserstein}.

Here, based on the distributions of $\varepsilon_2(t)$,  $\varepsilon^r_2(t)$, $W_1(t)$,  across all snapshots and initial conditions, we report the mean values and the 10$^{\text{th}}$–90$^{\text{th}}$ percentiles. 

\section{Case Studies}
\label{sec:casestudies}

For our illustration, we considered two benchmark problems with mass conservation: (a) The Hughes model of crowd dynamics describing individuals minimizing travel time, formalized as a coupled PDE system in which the density evolves according to a continuity equation, while motion follows the gradient of an eikonal potential that naturally guides agents around obstacles and high-density regions; in the present case study, we considered the Hughes model applied to pedestrian flow in a corridor with an obstacle (see Fig.~\ref{fig:domain}(a));  (b) a fluid dynamics problem, which involves the spatio-temporal evolution of a passive tracer advected by a Navier–Stokes velocity generated by a three-cylinder (fluidic pinball) configuration (see Fig.~\ref{fig:domain}(b)).

\subsection{Hughes Model for Crowd Dynamics}
\label{sec:layout}

\begin{figure}
    \centering

    \begin{subfigure}{0.48\textwidth}
        \centering
        \includegraphics[width=\linewidth]{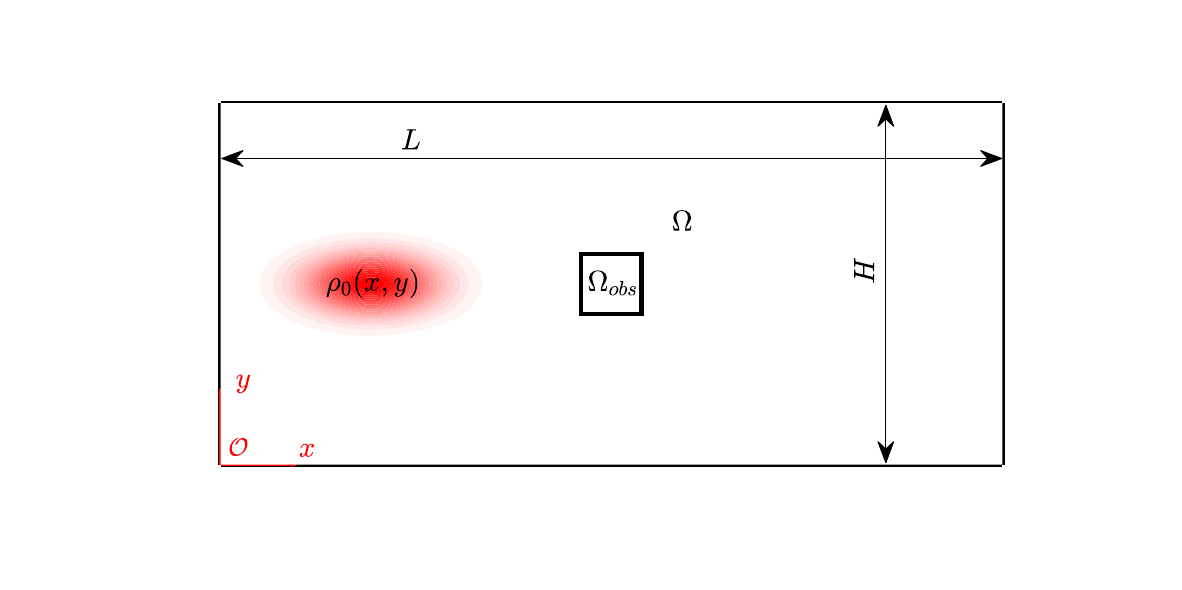}
        \caption{}
        \label{fig:domain_a}
    \end{subfigure}
    \hfill
    \begin{subfigure}{0.48\textwidth}
        \centering
        \includegraphics[width=\linewidth]{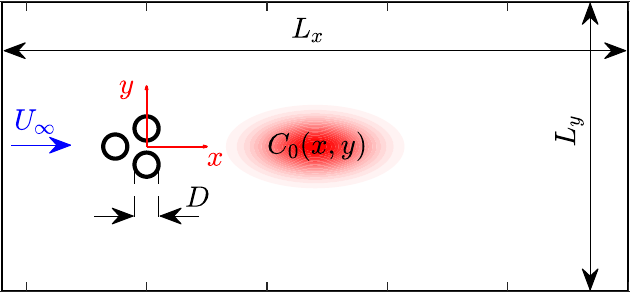}
        \caption{}
        \label{fig:domain_b}
    \end{subfigure}
    \caption{Schematic representation of the case studies: (a) pedestrian motion in a corridor with an obstacle is modeled using the Hughes model; (b) a fluid dynamics setup, where a passive tracer is transported by the Navier–Stokes velocity field produced by a three-cylinder “fluidic pinball” arrangement.}
    \label{fig:domain}
\end{figure}

A seminal model for approximating behavioral crowd dynamics via PDEs is the continuum Hughes model~\cite{hughes2002continuum,goatin2013wave,amadori2014existence,amadori2023mathematical}, which combines a conservation law for the crowd density with an Eikonal equation to compute shortest-time paths to a common goal. Crucially, the \textit{optimal} path selection is assumed to depend on the global distribution of pedestrians in the environment, necessitating a dynamic resolution of the Eikonal equation at every time step. Although the local density–velocity relationship appears explicitly in the conservation law, the coupling with the Eikonal equation imparts a global character to the dynamics. This allows the model to reproduce complex behavioral features such as obstacle avoidance and re-routing in congested environments~\cite{coscia_canavesio}. Both theoretical investigations and empirical studies support the notion that pedestrian speed and direction are influenced by the full spatial distribution of the crowd, rather than just local surroundings~\cite{cristiani2014multiscale}. Consequently, while such global dependencies enhance model fidelity, they also introduce substantial challenges in terms of analytical tractability and computational cost. This motivates the need for efficient and compact representations of crowd motion—specifically, ROMs accounting for the \textit{unknown hidden behavioral} dynamics~\cite{bellomo2008modelling,bellomo2023human,cristiani2014multiscale,kemeth2018emergent,bellomo2023behavioral,bellomo2023human}—so as to retain the essential features required to reproduce its global evolution.

The Hughes model~\cite{hughes2002continuum}  relies on three behavioral hypotheses: (i) pedestrian speed $f(\rho)$ depends solely on the local density $\rho$, following empirical pedestrian-flow relationships~\cite{greenshields1934study,lighthill1955kinematic}; (ii) pedestrians move in the direction that minimizes a potential function $\phi$, representing perceived travel effort or time; and (iii) individuals aim to minimize travel time while avoiding high-density regions, modeled via a separable cost function combining speed $f(\rho)$ and a discomfort factor $g(\rho)$. This yields a coupled PDE system for crowd density $\rho$ and potential $\phi$:
\begin{subequations}
\begin{align}
\frac{\partial \rho}{\partial t} &=\frac{\partial}{\partial x} \left( \rho f(\rho) \frac{\partial \phi}{\partial x} \frac{1}{\|\nabla \phi\|} \right) 
+ \frac{\partial}{\partial y} \left( \rho f(\rho) \frac{\partial \phi}{\partial y} \frac{1}{\|\nabla \phi\|} \right),\label{hughes_a}\\
\|\nabla \phi\| &=\frac{1}{f(\rho) g(\rho)},\label{hughes_b}
\end{align}
\end{subequations}
with $\|\nabla \phi\| = \sqrt{(\partial \phi / \partial x)^2 + (\partial \phi / \partial y)^2}$. Here, $f(\rho)$ represents the pedestrian speed, while $g(\rho)$ accounts for the pedestrian discomfort at high crowd densities. A linear velocity–density relationship is considered, expressed as
\begin{equation}
f(\rho) = v_f \left(1 - \frac{\rho}{\rho_m}\right),
\label{veldens}
\end{equation}
where $v_f$ denotes the free-flow speed and $\rho_m$ the maximum pedestrian density. In this study, these parameters are set to $v_f = 1~\text{m/s}$ and $\rho_m = 5~\text{people/m}^2$, respectively, and the discomfort factor is $g(\rho) = 1$. Additional details on data generation, as well as initial and boundary conditions, are provided in \ref{app:numerical_scheme}.

\begin{figure}
\centering
\includegraphics[width=0.9\textwidth]{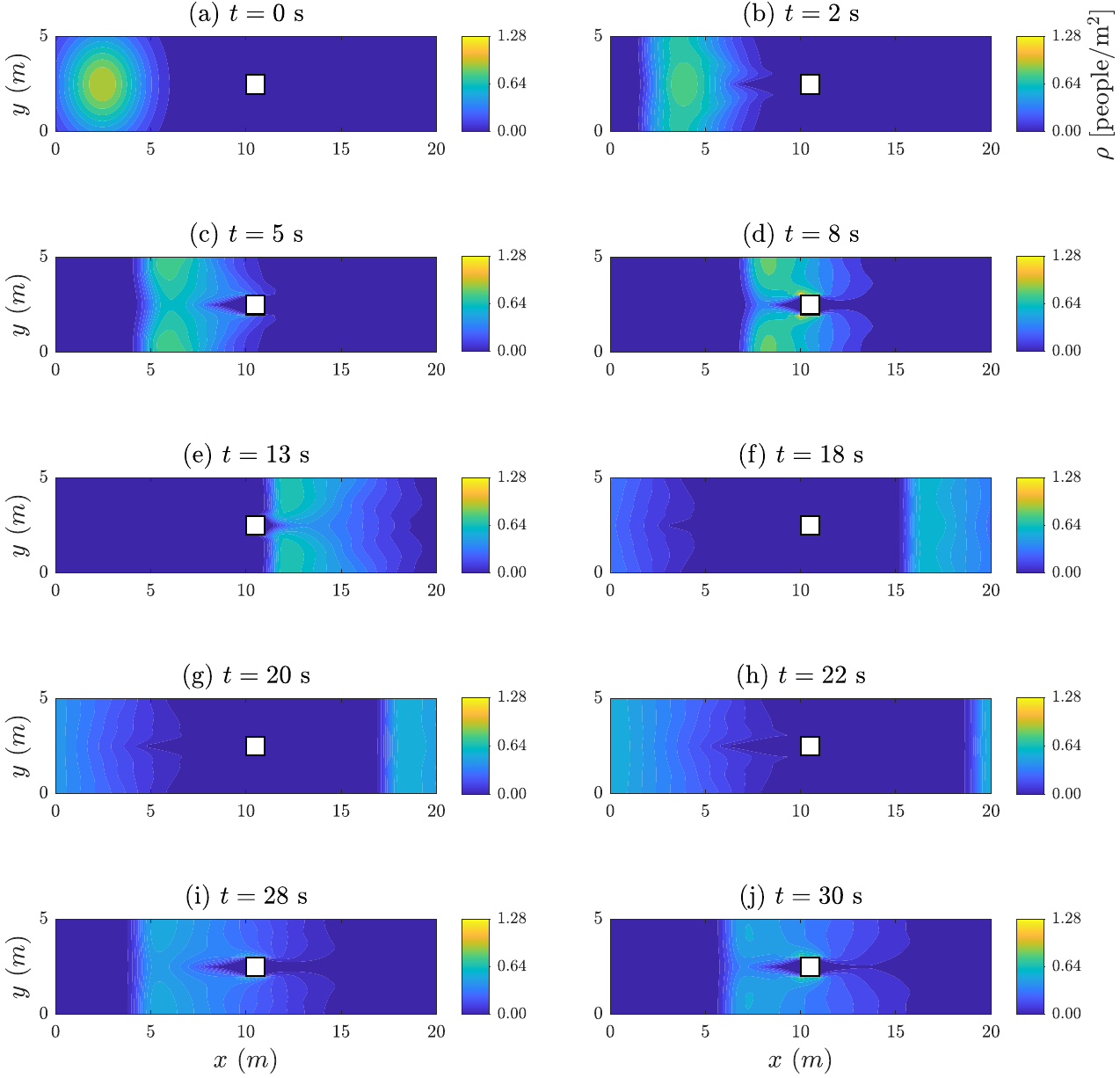}
\caption{Contour maps of the Hughes PDE solution $\rho(t,x,y)$ for the Gaussian initial condition $\rho_0(x,y)$ (see Eq.~\eqref{eq:initial_condition} in \ref{app:numerical_scheme}) with $x_0=2.5$~m, $y_0=2.5$~m, $\sigma_x=1.8$~m, $\sigma_y=1.8$~m, at different time instants: (a) $t=0$~s; (b) $t=2$~s; (c) $t=5$~s; (d) $t=8$~s; (e) $t=13$~s; (f) $t=18$~s; (g) $t=20$~s; (h) $t=22$~s; (i) $t=28$~s; (j) $t=30$~s.}
\label{fig:contour}
\end{figure}

A typical evolution of the density field is shown in Fig.~\ref{fig:contour}.
Initially, the crowd density (see Eq.~\eqref{eq:initial_condition} in \ref{app:numerical_scheme}) exhibits dominant streamwise motion (Figs.~\ref{fig:contour} (a),(b)), driven by the collective intent to reach the exit. Interestingly, the initial splitting of the crowd occurs before physically encountering the obstacle. This reflects the tendency of pedestrians to adjust trajectories in order to minimize perceived travel cost. As the crowd approaches the obstacle, the density bifurcates into two nearly symmetric (due to the given initial condition) streams (Figs.~\ref{fig:contour}(c),(d)), corresponding to two primary flows circumventing the obstruction. After passing the obstacle (Fig.~\ref{fig:contour}(e)), the two main streams gradually merge  (Fig.~\ref{fig:contour}(f)), reestablishing a unified front-like wave, and so on (Figs.~\ref{fig:contour}(g),(h),(i),(j)). Periodic boundaries reintroduce the crowd at the inlet, generating an almost cyclic pattern of inflow, splitting, and merging.

\subsection{Passive Tracer Transport in Navier–Stokes Flow}
\label{sec:pinball}

\begin{figure}
	\centering
	\includegraphics[scale=0.8]{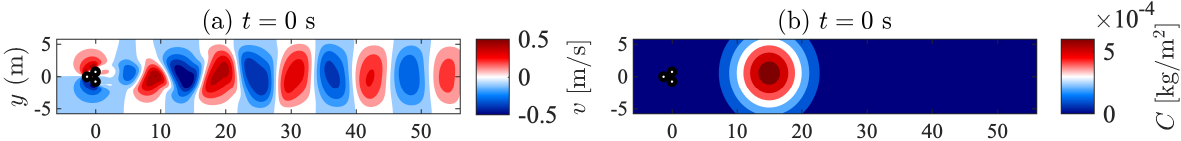}
    \vspace{0.1cm}
    	\includegraphics[scale=0.8]{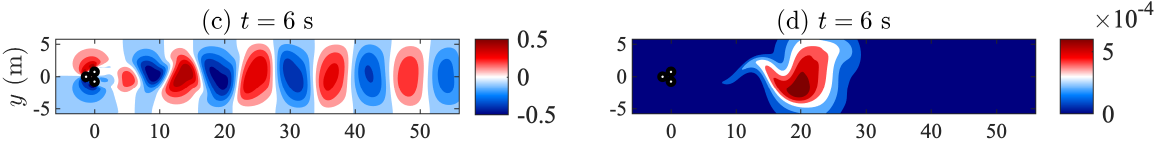}
            \vspace{0.1cm}
        	\includegraphics[scale=0.8]{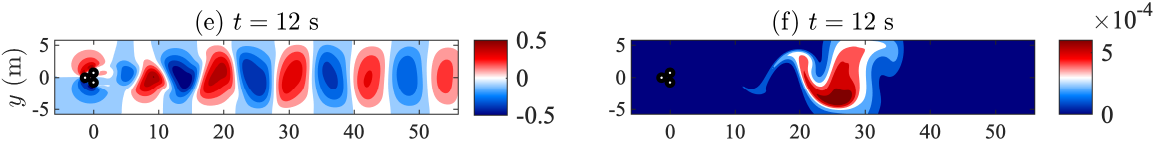}
                \vspace{0.1cm}
            	\includegraphics[scale=0.8]{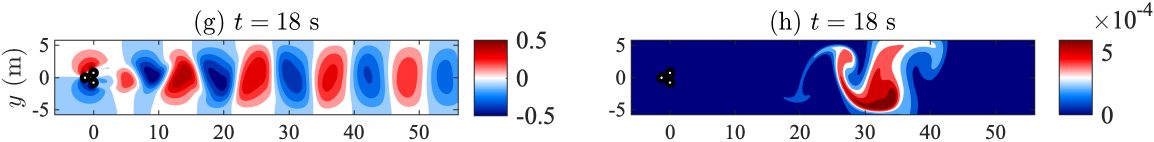}
                    \vspace{0.1cm}
                	\includegraphics[scale=0.8]{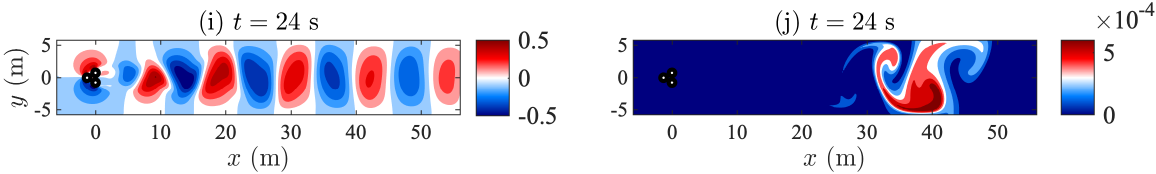}
	\caption{\label{fig:pinball_contour} Contour maps of the Navier-Stokes cross-stream velocity component $v(t,x,y)$ (left panels) and the passive tracer distribution $C(t,x,y)$ corresponding to the Gaussian initial condition $C_0(x,y)$ (see Eq.~\eqref{eq:initial_condition_pinball} in \ref{app:Navier-Stokes}) with $x_0=14.83$~m, $y_0=0.44$~m, $\sigma_x=4$~m, $\sigma_y=4.3$~m, at different time instants: (a)-(b) $t=0$~s; (c)-(d) $t=6$~s; (e)-(f) $t=12$~s; (g)-(h) $t=18$~s; (i)-(j) $t=24$~s.}
\end{figure}

The fluidic pinball is a flow configuration consisting of three rotatable cylinders of equal diameter $D$, whose axes are located at the vertices of an equilateral triangle, as sketched in Fig.~\ref{fig:domain}(b). The triangle has a centre-to-centre side length equal to $1.5D$ and is immersed in a viscous, incompressible flow with uniform upstream velocity $U_\infty$. The governing parameter of this configuration is the Reynolds number,
\begin{equation}
Re = \dfrac{U_\infty D}{\nu},
\end{equation}
where $\nu$ denotes the fluid kinematic viscosity. As $Re$ increases, the flow undergoes a sequence of bifurcations—including Andronov--Hopf, pitchfork, and Neimark--Sacker bifurcations—eventually leading to chaotic dynamics \cite{Deng_Noack_2020}. In this work, we focus on the regime at $Re = 30$, where the dynamics are characterized by spatio-temporal periodic oscillations.

The dataset is obtained from direct numerical simulations of the incompressible two-dimensional Navier--Stokes equations coupled with a linear advection equation for a passive scalar tracer,
\begin{subequations}
	\begin{eqnarray}
	\dfrac{\partial u}{\partial x} + \dfrac{\partial v}{\partial y} &=& 0, \label{eq:continuity} \\
	\dfrac{\partial u}{\partial t} + u \dfrac{\partial u}{\partial x} + v \dfrac{\partial u}{\partial y} &=& 
	-\dfrac{1}{\rho}\dfrac{\partial p}{\partial x} + \nu\left(\dfrac{\partial^2 u}{\partial x^2} + \dfrac{\partial^2 u}{\partial y^2}\right), \label{eq:momentum_u} \\
	\dfrac{\partial v}{\partial t} + u \dfrac{\partial v}{\partial x} + v \dfrac{\partial v}{\partial y} &=& 
	-\dfrac{1}{\rho}\dfrac{\partial p}{\partial y} + \nu\left(\dfrac{\partial^2 v}{\partial x^2} + \dfrac{\partial^2 v}{\partial y^2}\right), \label{eq:momentum_v} \\
	\dfrac{\partial C}{\partial t} + u \dfrac{\partial C}{\partial x} + v \dfrac{\partial C}{\partial y} &=& 0. \label{eq:advection_f}
	\end{eqnarray}
\end{subequations}

Here, $u$ and $v$ denote the streamwise ($x$) and cross-stream ($y$) velocity components, respectively, $p$ is the pressure, $\rho$ is the (constant) density, and $C$ is the passive tracer concentration. Additional details on data generation, as well as initial and boundary conditions, are provided in \ref{app:Navier-Stokes}.

In the Navier–Stokes system coupled with a passive tracer, the tracer concentration alone does not provide full observability of the underlying dynamics. The velocity field and pressure naturally act as hidden variables, mediating the transport, mixing, and deformation processes that govern the tracer evolution. As a result, the tracer dynamics effectively evolve on a latent manifold shaped  by these unobserved/hidden flow variables.

In Fig.~\ref{fig:pinball_contour}, typical evolutions of the cross-stream velocity field \(v\) (left panels) and of the passive tracer density \(C\) (right panels) are reported.
Note that the time \(t=0\) corresponds to the instant at which the tracer is injected into the domain (see Eq.~\eqref{eq:initial_condition_pinball} in \ref{app:Navier-Stokes}). The velocity field exhibits global spatio-temporal oscillations associated with the periodic vortex shedding that develops downstream of the pinball configuration in this Reynolds number regime (left panels in Fig.~\ref{fig:pinball_contour}). These oscillations arise from the \emph{Andronov--Hopf bifurcation} undergone by the pinball flow at \(Re \approx 18\), beyond which the steady base flow loses stability and a time-periodic limit cycle emerges \cite{Deng_Noack_2020}. The resulting von K\'arm\'an--type vortex street dominates the wake dynamics. As a consequence, the passive tracer is advected by the underlying velocity field and progressively deforms, stretches, and mixes as it is entrained by the coherent vortical structures. In particular, the tracer distribution closely follows the alternating vortex-shedding pattern, with regions of high tracer concentration being wrapped around the vortices and transported downstream. This process leads to an increasingly filamented tracer field whose spatial organization mirrors that of the underlying vorticity field, highlighting the strong coupling between wake dynamics and scalar transport in this regime.

\section{Numerical Results}
\label{sec:results}

\begin{figure}
    \centering
    \includegraphics[width=0.8\textwidth]{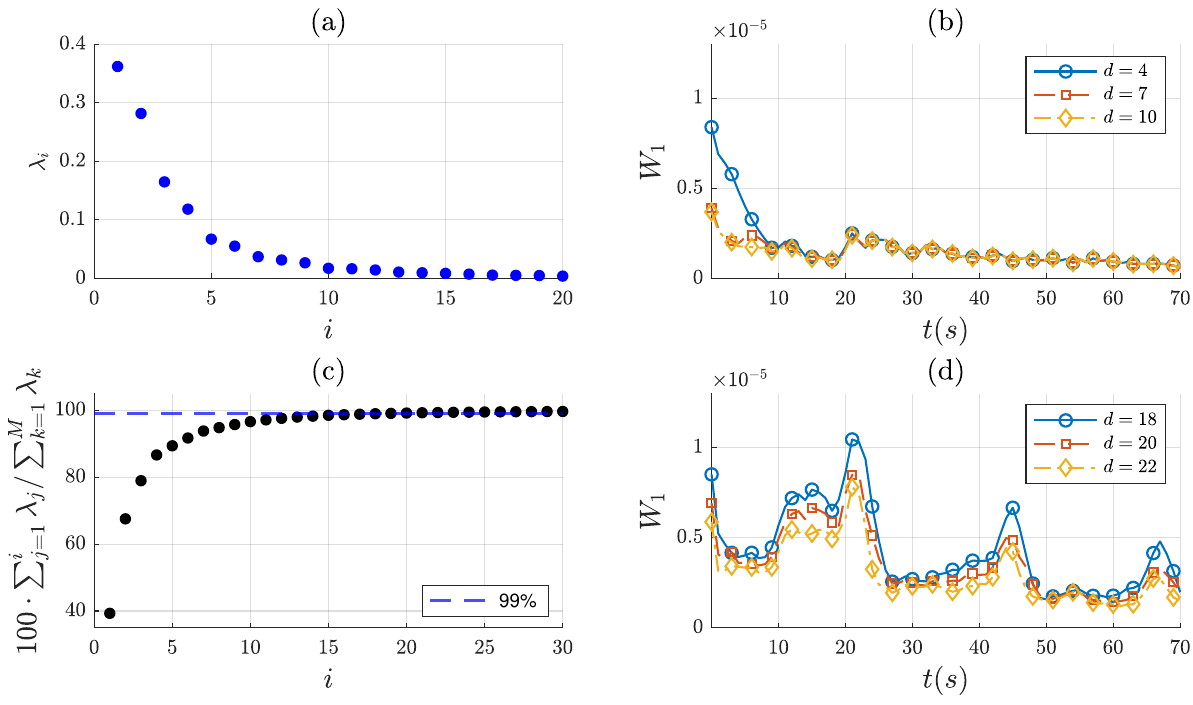}
    \caption{Crowd dynamics paradigm: (a) DMs eigenvalues \(\lambda_i\); (b) baseline mean reconstruction error \(W_1\) computed over \(d=4\), \(7\), and \(10\) DMs coordinates for the \(N_{\text{train}} = 40\) training simulations; (c) cumulative distribution of POD eigenvalues (black circles) and 99\% cumulative variance threshold (blue dashed line); (d) baseline mean reconstruction error \(W_1\) for \(d=18\), \(20\), and \(22\) POD coordinates on the training set.}
    \label{fig:PCA}
\end{figure}

\subsection{The Crowd Dynamics Paradigm}
\label{subsec:crowd}
Here, we evaluate the proposed methodology using numerical simulations of the Hughes PDE model for pedestrian dynamics. For our illustration, we have created \(N_{ic} = 100\) distinct initial conditions by uniformly sampling the parameters of the 2D Gaussian distribution (see Eq.~\eqref{eq:initial_condition} in \ref{app:numerical_scheme}) within the ranges \(x_0, y_0 \in [1.5, 3.5]\) and \(\sigma_x, \sigma_y \in [1.6,2]\), and integrate each simulation until \(t_f = 70\)~s. From these, we constructed three datasets: a training set \(X_{\text{train}}\) containing \(N_{\text{train}} = 40\) randomly selected simulations, a validation set with \(N_{\text{val}} = 20\) samples for model selection, and a test set \(X_{\text{test}}\) including the remaining \(N_{\text{test}} = 40\) simulations for out-of-sample evaluation.
The construction of \(X_{\text{train}}\), \(X_{\text{val}}\), and \(X_{\text{test}}\)—including parameter ranges and temporal resolution—follows the protocol detailed in Section~\ref{sec:layout}. 

In the first stage, as previously discussed, we employed both POD and DMs to learn a low-dimensional parametrization of the manifold. Given the high dimensionality of each snapshot (\(N = N_x \times N_y = 200 \times 50\)) and the large ensemble of training data (\(N_{\text{train}} \times N_t = 40 \times 700\)), performing manifold learning on the full dataset would be computationally prohibitive. Moreover, in the present crowd dynamics configuration, there exists a single nonlinear basin of attraction: trajectories originating from different initial conditions converge toward similar long-term behavior, with most variability concentrated in the initial transient phase and around the obstacle. During this transient—when eikonal-driven optimal paths bifurcate and spatial patterns (e.g., splitting flows and front-wave formations) emerge—the dynamics are particularly informative and of primary interest (see Fig.~\ref{fig:contour} and related discussion in Section~\ref{sec:layout}). 
Consequently, we learn the restriction and lifting operators from a subset of snapshots emphasizing the transient dynamics: two-thirds of the selected snapshots are drawn from the first \(10\,\text{s}\) of each simulation, and one-third from the remaining time. In total, \(X_{\text{train}}\) contains \(6{,}000\) snapshots for operator construction, with approximately \(4{,}000\) from the initial \(10\,\text{s}\) and \(2{,}000\) from the later interval. This design keeps the computational cost manageable while enhancing the reconstruction of early, fast evolving dynamics without neglecting the slow, more homogeneous regime that follows.

\begin{table}[ht]
\centering
\setlength{\tabcolsep}{3pt} 
\small
\begin{tabularx}{\textwidth}{lCCCC}
\toprule
\textbf{Model} & $\boldsymbol{\varepsilon_2} (\times 10^{-3})$ &
$\boldsymbol{\varepsilon_2^{\,r}} (\times 10^{-1})$ &
$\boldsymbol{W_1} (\times 10^{-6})$ & \#\textbf{params} \\
\midrule

\rowcolor{blue!18}
POD-MVAR \tiny{$(d=20,\ l=5)$} & $1.40~(0.92,2.00)$ & $0.97~(0.62,1.31)$ & $6.53~(3.89,9.94)$ & 2000 \\

\rowcolor{red!18}
DMs-MVAR \tiny{$(d=10, \ l=8)$} & $1.10~(0.37,1.80)$ & $0.69~(0.28,1.15)$ & $1.64~(0.48,3.20)$ & 800 \\

\rowcolor{blue!18}
POD-SINDy \tiny{$(d=22, \ p(1)+f)$} & $2.35~(1.54,3.36)$ & $1.59~(1.06,2.23)$ & $8.52~(6.29,11.43)$ & 1470 \\

\rowcolor{red!18}
DMs-SINDy \tiny{$(d=7, \ p_{nc}(2))$} & $1.32~(0.74,2.35)$ & $0.88~(0.50,1.56)$ & $3.59~(0.84,11.12)$ & 250 \\

\bottomrule
\end{tabularx}

\caption{Crowd dynamics paradigm. Summary of the selected POD-informed ROMs and DMs-informed ROMs performance on the test set \(X_{\text{test}}\): absolute $\ell_2$ error $\varepsilon_2$, relative $\ell_2$ error $\varepsilon_2^{\,r}$, and Wasserstein–1 distance $W_1$ in terms of average over time and 10$^{\text{th}}$–90$^{\text{th}}$ percentiles. The last column reports the number of parameters of each ROM.}
\label{tab:rom_metrics_test}
\end{table}

\begin{figure}[ht]
    \centering
    \includegraphics[width=0.9\textwidth]{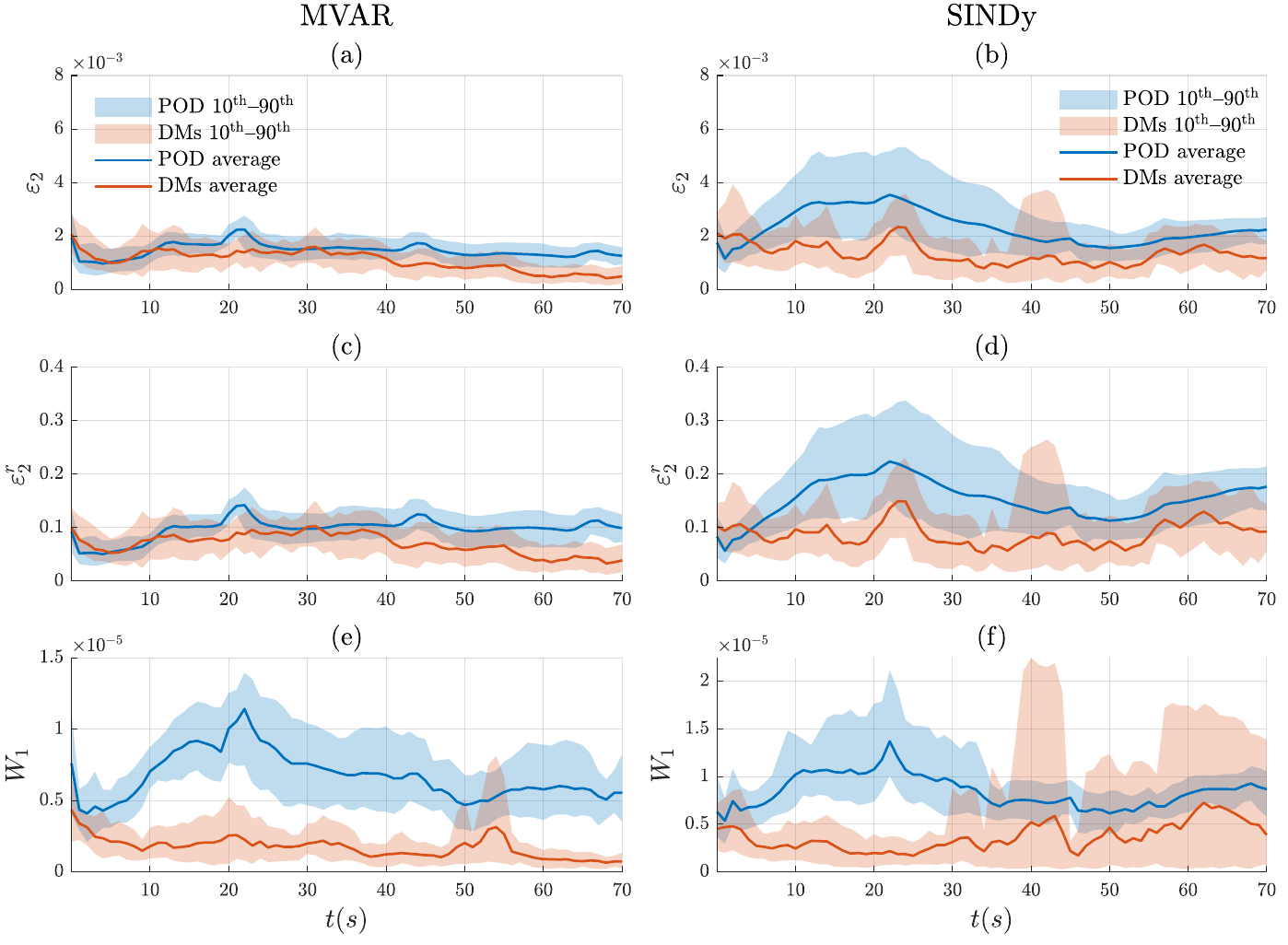}
    \caption{Crowd dynamics paradigm: time evolution of reconstruction errors on the test set \(X_{\text{test}}\). (a,b) Absolute \(\ell_2\) error (\(\varepsilon_2\)); (c,d) relative \(\ell_2\) error (\(\varepsilon_2^r\)); (e,f) Wasserstein distance (\(W_1\)) for the selected POD-MVAR $(d = 20, \ l = 5) $, DMs-MVAR $(d = 10, \ l = 8)$, POD-SINDy $(d = 22, \ p(1)+f) $, and DMs-SINDy $(d = 7, \ p_{nc}(2)) $ reduced-order-models across the \(N_{\text{test}}=40\) test simulations. In all panels, solid lines denote the mean error between the high-dimensional Hughes PDE solution (ground truth) and the ROM predictions, while shaded bands indicate the 10$^{\text{th}}$–90$^{\text{th}}$ percentiles.}
    \label{fig:MAR_var_IC}
\end{figure}

\begin{figure}
    \centering
    \includegraphics[width=0.8\textwidth]{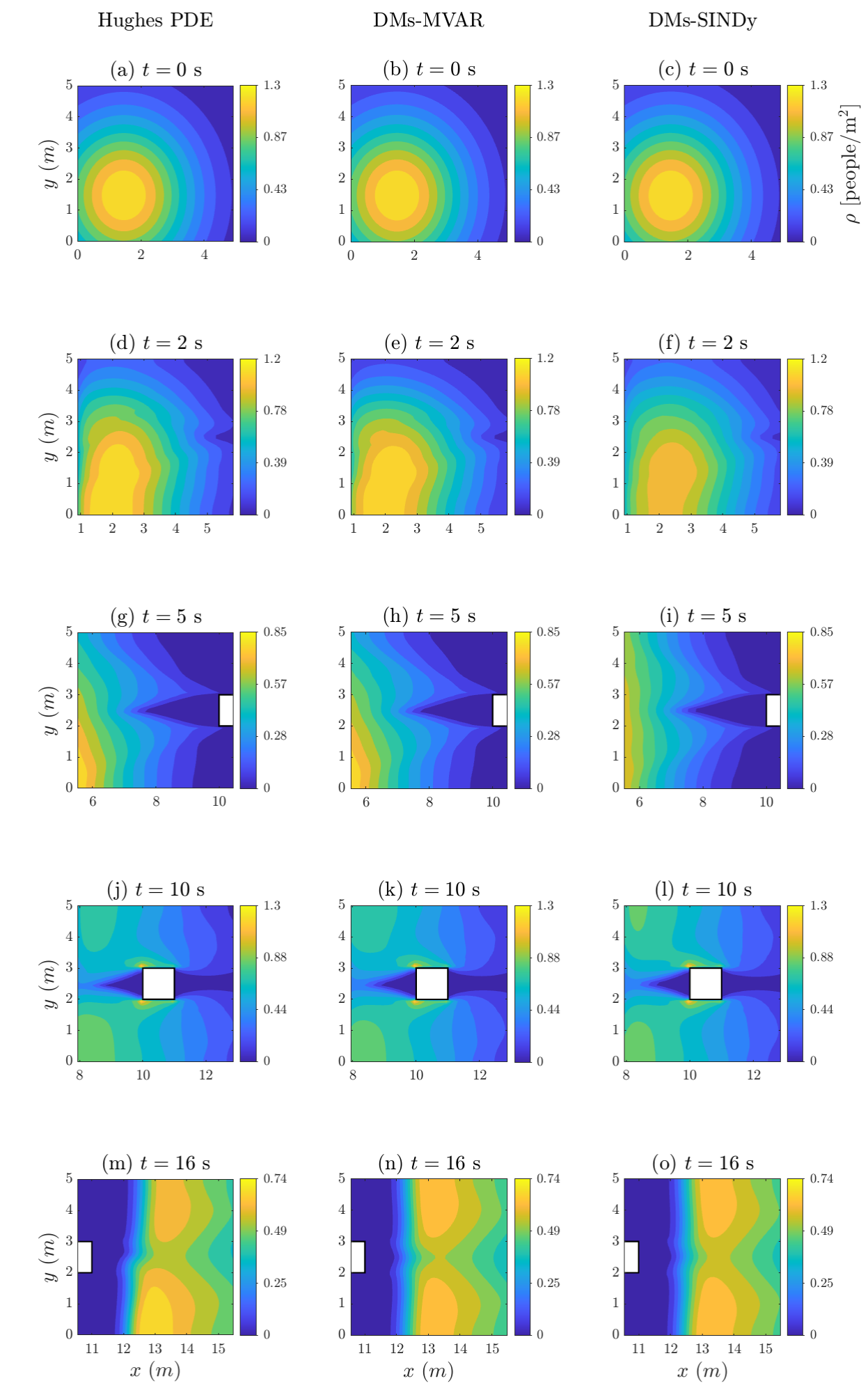}
    \caption{Comparison between the ground-truth Hughes PDE solution (left column), the DMs-informed MVAR  $(d = 10, \ l = 8)$  prediction (center column), and the DMs-informed SINDy $(d = 7,  \ p_{nc}(2)) $  prediction (third column)  at different time instants \(t\) for an unseen Gaussian initial condition \(\rho_0(x,y)\) (see Eq.~\eqref{eq:initial_condition}) with parameters \(x_0=1.5\,\text{m}\), \(y_0=1.5\,\text{m}\), \(\sigma_x=1.6\,\text{m}\), and \(\sigma_y=1.8\,\text{m}\), belonging to the test set \(X_{\text{test}}\): ((a)–(c)) \(t=0\,\text{s}\); ((d)–(f)) \(t=2\,\text{s}\); ((g)–(i)) \(t=5\,\text{s}\); ((j)–(l)) \(t=10\,\text{s}\); ((m)–(o)) \(t=16\,\text{s}\).}
    \label{fig:GT_recon_Compar_DMs}
\end{figure}

Fig. ~\ref{fig:PCA} summarizes the results of the DMs and POD embeddings computed from the \(N_\text{train}=40\) training simulations. Fig. ~\ref{fig:PCA}(a) reports the DMs eigenvalues \(\lambda_i\), while Fig. ~\ref{fig:PCA}(b) shows the mean baseline reconstruction error in terms of the Wasserstein–1 distance (\(W_1\)) for \(d=4\), \(7\), and \(10\) DMs coordinates. It can be noted that all error distributions converge to the same value for \(t>20\) s, but only the embeddings with \(d=7\) and \(d=10\) exhibit low errors during the transient phase (\(t < 20\) s). Hence, we primarily consider the first \(d=7\) DMs and later test \(d=10\) to evaluate whether additional coordinates improve latent-space predictions. Fig. ~\ref{fig:PCA}~(c) displays the cumulative distribution of the POD eigenvalues, and Fig. ~\ref{fig:PCA}(d) reports the corresponding mean reconstruction errors \(W_1\) for \(d=18\), \(20\), and \(22\) POD modes. We find that \(d=20\)–\(22\) POD modes—capturing more than 99\% of the cumulative variance—are required to achieve reconstruction accuracy comparable to that obtained with only \(d=7\)–\(10\) DMs coordinates. Overall, DMs achieve equivalent, and in the transient regime even superior, reconstruction accuracy using far fewer coordinates than POD.

In the second step, we trained two families of manifold-informed ROMs: one based on POD coordinates and another based on Diffusion Maps (DMs) coordinates. For each family, we employed both MVAR and SINDy models. A detailed analysis of the different ROMs performance on the training and validation datasets, including the dependence on the number of retained coordinates and modes, is provided in \ref{app:results_insights}. This analysis allowed us to select the four best models, based on the validation set, one for each combination of embedding algorithm (POD and DMs) and surrogate model (MVAR and SINDy). In the following, we focus on these selected models and their performance on the test set.

Fig.~\ref{fig:MAR_var_IC} shows the temporal evolution of the test-set error metrics, for the selected POD- and DMs-informed MVARs and SINDy ROMs: absolute \(\ell_2\) error (\(\varepsilon_2\)), relative \(\ell_2\) error (\(\varepsilon_2^{\,r}\)), and Wasserstein--1 distance (\(W_1\)) in the reconstructed high-dimensional space, together with 10$^{\text{th}}$–90$^{\text{th}}$ percentiles. We note that all error metrics in these diagrams are initialized at the first predicted snapshot. 

The time-averaged errors and associated percentiles for the four models are also reported in Table~\ref{tab:rom_metrics_test}. The POD-MVAR  and DMs-MVAR models are constructed with $d=20$ and $d=10$ variables and with $l=5$ and $l=8$ time delays, respectively. the POD-SINDy and DMs-SINDy models are constructed with $d=22$ and $d=7$ variables, respectively, and with a polynomial library of order one augmented with Fourier features ($p(1)+f$) and a polynomial library of order two without cross terms ($p_{nc}(2)$), respectively. As shown, the DMs-informed models  consistently outperform the POD-informed models on average across all metrics.

In Fig.~\ref{fig:GT_recon_Compar_DMs}, we provide representative snapshots of the ground-truth Hughes PDE solution (left panels), the DMs-informed MVAR (center panels), and the DMs-informed SINDy (right panels) predictions, for an unseen initial condition from the test set. As shown, the DMs-informed ROMs are able to capture spatial asymmetries and nonlinear features of the evolving field with high fidelity. It is interesting to highlight that the DMs-informed SINDy (with just one delay) and DMs-informed MVAR (with multiple delays) exhibit comparable predictive accuracy. We note that, as detailed in Tables~\ref{tab:rom_metrics_train} and ~\ref{tab:rom_metrics_validation} in \ref{app:results_insights} summarizing the results on the training and validation sets, respectively, while the POD-informed SINDy performs relatively well-even with just one delay-though still worse than the DMs-informed SINDy, the POD-informed MVAR with one delay fails entirely to capture the dynamics. 

\subsection{The Fluid dynamics Paradigm}
\label{subsec:results_fluid}

\begin{figure}
    \centering
    \includegraphics[width=0.8\textwidth]{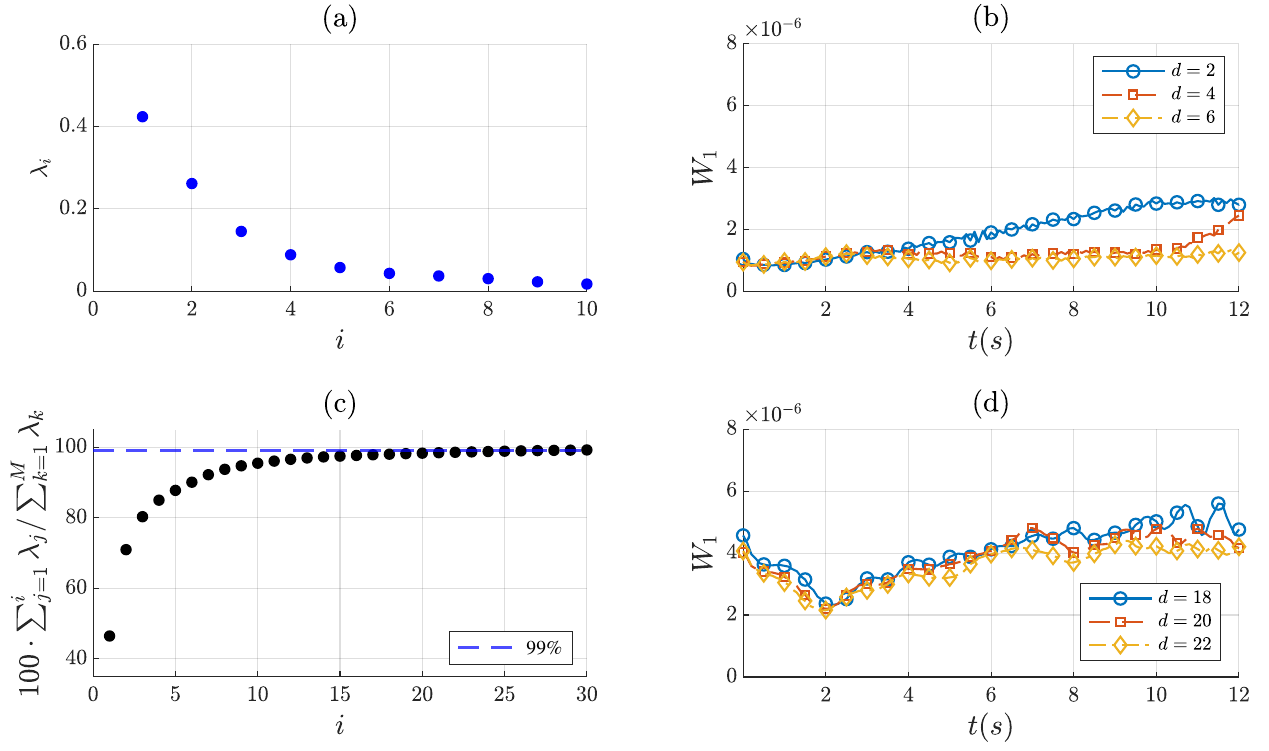}
    \caption{Fluid dynamics paradigm: (a) DMs eigenvalues \(\lambda_i\); (b) baseline mean reconstruction error \(W_1\) computed over \(d=2\), \(4\), and \(6\) DMs coordinates for the \(N_{\text{train}} = 35\) training simulations; (c) cumulative distribution of POD eigenvalues (black circles) and 99\% cumulative variance threshold (blue dashed line); (d) baseline mean reconstruction error \(W_1\) for \(d=18\), \(20\), and \(22\) POD coordinates on the training set.}
    \label{fig:PCA_fluid}
\end{figure}

Here, we evaluate the proposed methodology using numerical simulations of the Navier-Stokes flow with passive tracer. We have created \(N_{ic} = 80\) distinct initial conditions by uniformly sampling the parameters of the 2D Gaussian distribution (see Eq.~\eqref{eq:initial_condition_pinball} in \ref{app:Navier-Stokes}) within the ranges \(x_0 \in [14, 16]\) m, \(y_0 \in [-1, 1]\) m, and \(\sigma_x, \sigma_y \in [4,5]\) m, , and integrate each simulation until \(t_f = 24\)~s. From these, we constructed three datasets: a training set \(X_{\text{train}}\) containing \(N_{\text{train}} = 35\) randomly selected simulations, a validation set with \(N_{\text{val}} = 10\) samples for model selection, and a test set \(X_{\text{test}}\) including the remaining \(N_{\text{test}} = 35\) simulations for out-of-sample evaluation.
The construction of \(X_{\text{train}}\), \(X_{\text{val}}\), and \(X_{\text{test}}\)—including parameter ranges and temporal resolution—follows the protocol detailed in Section~\ref{subsec:crowd} for the crowd dynamics paradigm.

\begin{figure}
    \centering
    \includegraphics[width=0.9\textwidth]{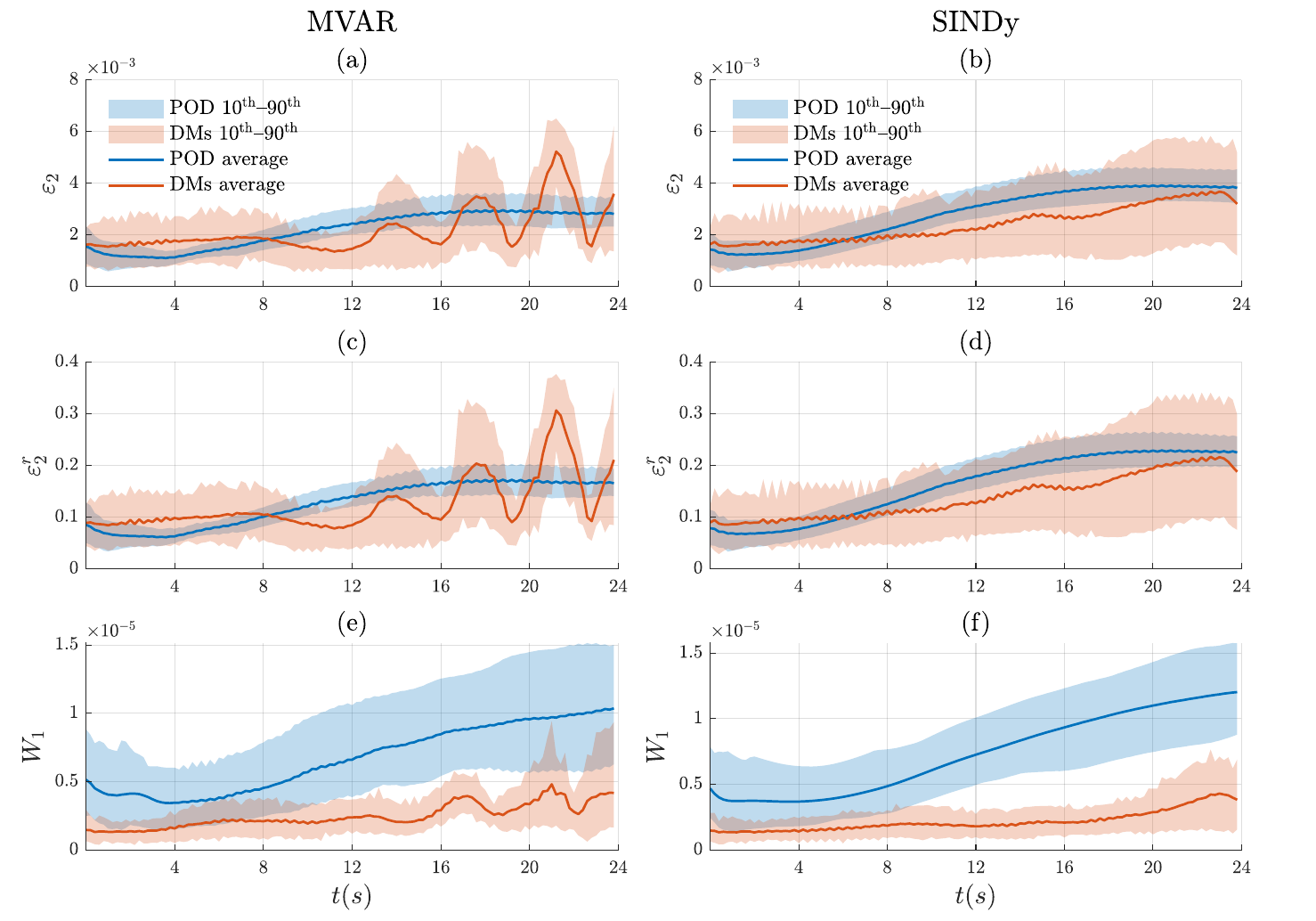}
    \caption{Fluid dynamics paradigm: time evolution of reconstruction errors on the test set \(X_{\text{test}}\). (a,b) Absolute \(\ell_2\) error (\(\varepsilon_2\)); (c,d) relative \(\ell_2\) error (\(\varepsilon_2^r\)); (e,f) Wasserstein distance (\(W_1\)) for the selected POD-MVAR $(d = 16, \ l = 3) $, DMs-MVAR $(d = 6, \ l = 3)$, POD-SINDy $(d = 20, \ p(1)+f) $, and DMs-SINDy $(d = 4, \ p(3)) $ reduced-order-models across the \(N_{\text{test}}=35\) test simulations. In all panels, solid lines denote the mean error between the high-dimensional Navier-Stokes PDE solution (ground truth) and the ROM predictions, while shaded bands indicate the 10$^{\text{th}}$–90$^{\text{th}}$ percentiles.}
    \label{fig:MAR_var_IC_pinball}
\end{figure}

In the first stage, as previously discussed, we employed both POD and DMs to learn a low-dimensional parametrization of the manifold. Fig.~\ref{fig:PCA_fluid} summarizes the results of the DMs and POD embeddings computed from the \(N_{\text{train}} = 40\) training simulations. Fig.~\ref{fig:PCA_fluid}(a) reports the DMs eigenvalues \(\lambda_i\), while Fig.~\ref{fig:PCA_fluid}(b) shows the mean baseline reconstruction error in terms of the Wasserstein--1 distance \(W_1\) for \(d = 2\), \(4\), and \(6\) DMs coordinates. Fig.~\ref{fig:PCA_fluid}(c) displays the cumulative distribution of the POD eigenvalues, and Fig.~\ref{fig:PCA_fluid}(d) reports the corresponding mean reconstruction errors \(W_1\) for \(d = 18\), \(20\), and \(22\) POD modes.

\begin{table}[ht]
\centering
\setlength{\tabcolsep}{3pt}
{
\small
\begin{tabularx}{\textwidth}{lCCCC}  
\toprule
\textbf{Model} & $\boldsymbol{\varepsilon_2} (\times 10^{-3})$ & $\boldsymbol{\varepsilon_2^{\,r}} (\times 10^{-1})$ & $\boldsymbol{W_1} (\times 10^{-6})$ & \#\textbf{params} \\
\midrule
\rowcolor{blue!18}
POD-MVAR \tiny{$(d=16,l=3)$} & $2.20~(1.74,2.74)$ & $1.26~(1.04,1.52)$ & $6.67~(3.84,10.50)$ & 768\\


 \rowcolor{red!18}
 DMs-MVAR \tiny{$(d=6, l=3)$} & $2.23~(1.19,3.42)$ & $1.27~(0.68,1.92)$ & $2.48~(1.19,4.32)$ & 108\\

\rowcolor{blue!18} 
POD-SINDy \tiny{$(d=20,p(1)+f)$} & $2.81~(2.29,3.48)$ & $1.61~(1.35,1.94)$ & $7.32~(4.69,10.36)$ & 1219\\

\rowcolor{red!18}
DMs-SINDy \tiny{$(d=4, p(3))$} & $2.41~(1.04,3.96)$ & $1.38~(0.61,2.23)$ & $2.17~(0.98,3.56)$
& 98\\

\bottomrule
\end{tabularx}
}
\caption{Fluid dynamics paradigm. Summary of the selected POD-informed ROMs and DMs-informed ROMs performance on the test set \(X_{\text{test}}\): absolute $\ell_2$ error $\varepsilon_2$, relative $\ell_2$ error $\varepsilon_2^{\,r}$, and Wasserstein–1 distance $W_1$ in terms of average over time and 10$^{\text{th}}$–90$^{\text{th}}$ percentiles. The last column reports the number of parameters of each ROM.}
\label{tab:rom_metrics_test_new}
\end{table}

\begin{figure}
    \centering
    \includegraphics[width=0.8\textwidth]{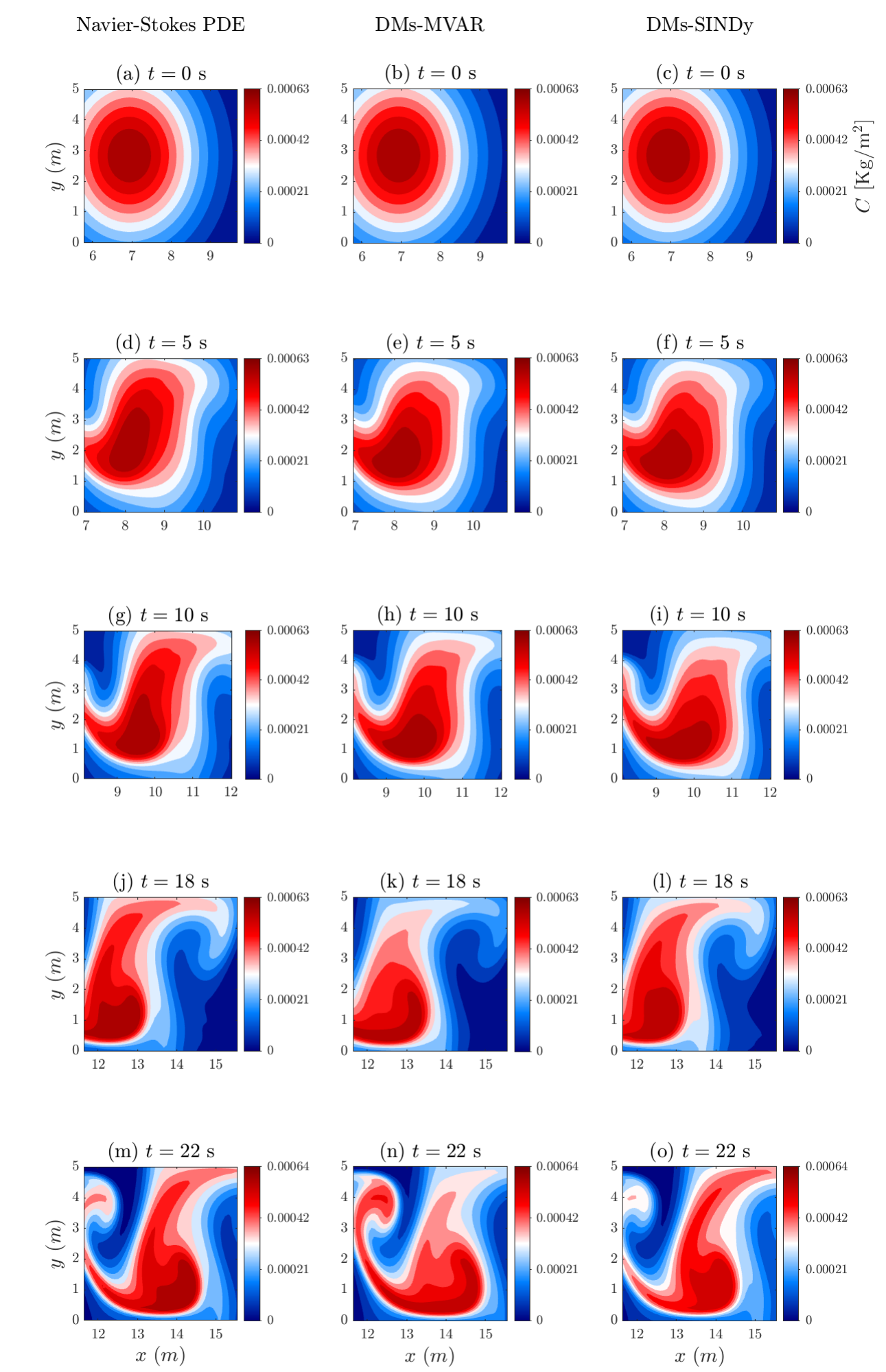}
    \caption{Comparison between the ground-truth Navier-Stokes PDE solution (left column), the DMs-informed MVAR prediction ($d=6, \ l=3$) (center column), and the DMs-informed SINDy prediction ($d=4, \ p(3)$) (right column) at different time instants \(t\) for an unseen Gaussian initial condition \(\rho_0(x,y)\) (see Eq.~\eqref{eq:initial_condition_pinball}) with parameters \(x_0=14.50\,\text{m}\), \(y_0=-0.79\,\text{m}\), \(\sigma_x=4.19\,\text{m}\), and \(\sigma_y=4.50\,\text{m}\), belonging to the test set \(X_{\text{test}}\): ((a)–(c)) \(t=0\,\text{s}\); ((d)–(f)) \(t=5\,\text{s}\); ((g)–(i)) \(t=10\,\text{s}\); ((j)–(l)) \(t=18\,\text{s}\); ((m)–(o)) \(t=22\,\text{s}\).}
    \label{fig:GT_recon_Compar_DMs_fluid}
\end{figure}

In the second step, as for the crowd dynamics paradigm, we trained again two families of manifold-informed ROMs: one based on POD embedding and another based on DMs embedding, and for each family of embeddings, we employed both MVARs and SINDy models. A detailed analysis of the different ROMs performance on the training and validation datasets, including the dependence on the number of retained coordinates and modes, is provided in \ref{app:results_insights_fluid}. From these, we  have selected the four best models according to the validation test, one for each combination of embedding algorithm (POD and DMs) and ROMs (MVAR and SINDy). In the following, we focus on these selected models and their performance on the test set.

The time-averaged errors and associated percentiles for the four models are reported in Table~\ref{tab:rom_metrics_test_new}. Note that the POD-MVAR and DMs-MVAR ROMs are constructed using $d=16$ and $d=6$ variables and  $l=3$ time delays, respectively. The POD-SINDy, and DMs-SINDy ROMs are constructed with $d=20$, and only $d=4$ variables, respectively, and with a polynomial library of order one augmented with Fourier features ($p(1)+f$) and a polynomial library of order three ($p(3)$).

Fig.~\ref{fig:MAR_var_IC_pinball} shows the temporal evolution of the test-set error metrics for the selected POD- and DMs-informed MVAR and SINDy ROMs: absolute \(\ell_2\) error (\(\varepsilon_2\)), relative \(\ell_2\) error (\(\varepsilon_2^{\,r}\)), and Wasserstein--1 distance (\(W_1\)) in the reconstructed high-dimensional space, together with 10$^{\text{th}}$–90$^{\text{th}}$ percentiles. Note that all curves are initialized at the first predicted snapshot.
As shown, the DMs-informed MVAR models exhibit oscillatory behavior despite comparable short-term accuracy in all metrics. On average, the DMs-informed SINDy models are more accurate with respect to $W_1$, and comparable to the other two metrics albeit with wider 10$^{\text{th}}$–90$^{\text{th}}$ percentiles spreads. We also note that the POD-informed  ROMs  (both MVARs and SINDy) result in a divergent $W_1$ metric.

Finally, in Fig.~\ref{fig:GT_recon_Compar_DMs_fluid}, we provide representative snapshots of the ground-truth Navier-Stokes PDE solution (left panels), the DMs-informed MVAR (center panels), and the DMs-informed SINDy (right panels) predictions, for an unseen initial condition from the test set. As illustrated by these snapshots and throughout the test set, the DMs-informed SINDy captures more accurately the spatio-temporal dynamics, a feature that it is not fully reflected by the global metrics reported in Table \ref{tab:rom_metrics_test_new}. In fact, the DMs-informed MVAR prediction shows a spatio-temporal shift relative to the ground-truth solution, which explains the oscillatory behavior observed in all the metrics reported in Fig.~\ref{fig:MAR_var_IC_pinball} (red curves in panels (a), (c), and (e)). This can be appreciated in particular by comparing Fig.~\ref{fig:GT_recon_Compar_DMs_fluid}(m) and Fig.~\ref{fig:GT_recon_Compar_DMs_fluid}(n), displaying the ground-truth Navier–Stokes solution and the DMs-MVAR prediction, respectively, at 
$t=22$ s, which is the time instant of maximum reconstruction error.


\section{Conclusions}
\label{sec:conclusions}
For systems with conservation laws, learning the full high-dimensional solution operator for a mass-conserving PDE is not trivial \cite{jagtap2020conservative,huang2023limitations,chen2024learning,patsatzis2025gorinns}: small model errors compound under long-time integration, producing significant drifts or even blow-ups.
The proposed PDE-free framework overcomes this limitation and applies broadly to systems that can, in principle, be described by PDEs—including those with hidden or unobserved variables—by avoiding the need to learn the full spatio-temporal operator. Instead, we construct reduced-order models (ROMs) in latent spaces, recovering an effective ODE-like operator using nonlinear manifold learning techniques and in particular Diffusion maps (DMs). This approach is far more computationally efficient than learning the full PDE solution operator, a task that suffers from the curse of dimensionality. Once trained, the manifold-informed ROM allows reconstruction of the original spatio-temporal fields via the manifold \textit{mass-preserving} pre-image problem, reproducing the high-dimensional solution operator \emph{implicitly} and \emph{on demand}. 
Hence, with such ROMs, one can perform accurate long-horizon simulations.

For these ROMs to achieve high accuracy over long time-horizons, the latent dynamics must be represented by a well-parameterized embedding of the underlying manifold.  Here and in other works~\cite{della2024learning,della2025surrogate}, we have shown that for that task, DMs outperform POD—which remains dominant in the vast majority of such studies. Consequently, significantly fewer coordinates are required to parametrize the low-dimensional latent space where robust and accurate ROMs can be constructed with reduced complexity. However, this advantage over POD comes with a challenge: the pre-image problem—mapping latent coordinates back to high-dimensional fields—is ill-posed and usually computationally more expensive. Crucially, mass conservation must be enforced consistently when lifting back to physical space by using an appropriate lifting scheme, such as the $k$-NN algorithm with convex interpolation~\cite{chin2024enabling}, which, as we have proven here, preserves mass positivity; as we also prove here, mass preservation is also achieved with POD lifting, even though positivity is not guaranteed.
Future work will address systems with multiple basins of attraction— and comparing our manifold-DMs-informed ROMs against autoencoders, neural operators and other ML architectures to quantify robustness, generalization and conservation properties.
Another direction is to handle cases where high-dimensional statistics are functions of low-dimensional variables (i.e., low-to-high probability distributions) \cite{giovanis2025generative}, thus identifying the underlying
SDEs, that preserve distributional constraints.

\appendix
\label{sec:appendix}

\section{Numerical simulation of the Hughes model}
\label{app:numerical_scheme}

High-dimensional crowd density data were generated by numerically solving the Hughes model (Eqs.~\eqref{hughes_a}–\eqref{hughes_b}) with an in-house finite-volume scheme implemented in Python. The spatial domain is discretized into \(N = N_x \times N_y = 200 \times 50\) cells, with $\rho_{i,j}^n$ denoting the cell-averaged density at $(x_i, y_j)$ and time $t^n$. To capture different crowd regimes, $N_{ic} = 100$ simulations with varying initial conditions are performed, each producing $N_t = 700$ snapshots spaced by $\Delta t = 0.1$~s. Snapshots are collected as column vectors $\mathbf{x}_m \in \mathbb{R}^N$ and assembled in the snapshot matrix $\mathbf{X} = \{\mathbf{x}_m\}_{m=1,\dots,M} \in \mathbb{R}^{N \times M}$, with $M = N_t \cdot N_{ic}$.

The domain is a rectangular corridor $\Omega$ with a central square obstacle $\Omega_{\text{obs}}$, length $L = 20$~m, height $H = 5$~m, and obstacle side $L_{\text{obs}} = 1$~m (see Fig.~\ref{fig:domain}(a) in Section~\ref{sec:layout}). Initial conditions are 2D Gaussian density distributions:
\begin{equation}
\rho_0(x,y) = \Gamma_0 \exp\left[-\frac{(x-x_0)^2}{2\sigma_x^2} - \frac{(y-y_0)^2}{2\sigma_y^2}\right],
\label{eq:initial_condition}
\end{equation}
with the total mass,
\begin{equation}
\label{eq_mass}
m = \iint_\Omega \rho_0(x,y) \, dx\,dy,
\end{equation}
kept constant across all initial conditions by adjusting $\Gamma_0$.

The Hughes PDE model (Eqs.~\eqref{hughes_a}--\eqref{hughes_b}) is complemented with the following boundary conditions prescribed on the inlet ($\Gamma_{\text{in}}$), outlet ($\Gamma_{\text{out}}$), the domain walls ($\Gamma_w$), and the obstacle boundary ($\Gamma_{\text{obs}}$):

\begin{equation}
\left\{
\begin{aligned}
\left( \rho f(\rho) \frac{\partial \phi/\partial n}{\|\nabla \phi\|} \right) \cdot \mathbf{n}(x,y) &= 0 && \text{on } (\Gamma_{\text{obs}} \cup \Gamma_w) \times [0,t_f],\\
\rho(0, y, t) &= \rho(L, y, t) && y \in [0,H],\ t \in [0,t_f],\\
\phi &= 0 && \text{on } \Gamma_{\text{out}} \times [0,t_f],
\end{aligned}
\right.
\label{bcic}
\end{equation}
where $\partial \Omega = \Gamma_{\text{in}} \cup \Gamma_{\text{out}} \cup \Gamma_w \cup \Gamma_{\text{obs}}$, and $\mathbf{n}$ is the local outward unit normal. Homogeneous Neumann conditions enforce zero flux at the walls, while periodicity in $x$ ensures inflow–outflow continuity. The potential is set to zero at the exit. Note that each simulation is run until approximately four loops of the domain are completed. 

The numerical solution of the Hughes model (Eqs.~\eqref{hughes_a}--\eqref{hughes_b}) is carried out using a finite-volume scheme combined with the Fast Sweeping Method~\cite{Zhao2004FastSweeping} to efficiently solve the nonlinear Eikonal equation~\eqref{hughes_b}. The approach outlined here specifies the spatial and temporal discretization of the governing equations, the evaluation of numerical fluxes via Godunov's method, and the stability condition adopted for time-step selection.

At each time step, given the current density field \( \rho_{i,j}^n \), the right-hand side of the Eikonal equation is evaluated, and the potential \( \phi_{i,j}^n \) is computed using the Fast Sweeping Method~\cite{Zhao2004FastSweeping}. Once the potential is available, the system~\eqref{hughes_a}--\eqref{hughes_b} reduces to a scalar nonlinear conservation law:
\begin{equation}
    \frac{\partial \rho}{\partial t} + \frac{\partial F_x(\rho,\phi)}{\partial x} + \frac{\partial F_y(\rho,\phi)}{\partial y} = 0,
    \label{veldens_new}
\end{equation}
where \( F_x(\rho, \phi) \) and \( F_y(\rho, \phi) \) denote the density fluxes along the \( x \)- and \( y \)-directions:
\begin{align}
F_x(\rho, \phi) &= \rho U_x(\rho, \phi) = \rho g(\rho) v_f \left(1 - \frac{\rho}{\rho_m} \right) \frac{\phi_x}{\sqrt{\phi_x^2 + \phi_y^2}}, \label{eq:fluxes_x}\\
F_y(\rho, \phi) &= \rho U_y(\rho, \phi) = \rho g(\rho) v_f \left(1 - \frac{\rho}{\rho_m} \right) \frac{\phi_y}{\sqrt{\phi_x^2 + \phi_y^2}}, \label{eq:fluxes_y}
\end{align}
with $\phi_x = \partial \phi/\partial x$ and $\phi_y = \partial \phi/\partial y$ evaluated by first-order forward finite differences of the discrete potential field.

The density field is then updated in conservative form:
\begin{equation}
\rho_{i,j}^{n+1} = \rho_{i,j}^{n}
- \frac{\Delta t}{\Delta x} \left( \hat{F}_{i+\frac{1}{2},j}^n - \hat{F}_{i-\frac{1}{2},j}^n \right)
- \frac{\Delta t}{\Delta y} \left( \hat{F}_{i,j+\frac{1}{2}}^n - \hat{F}_{i,j-\frac{1}{2}}^n \right),
\label{eq:fv_update}
\end{equation}
where \( \Delta x = \Delta y \) denotes the uniform grid spacing.

The intercell fluxes are computed using Godunov's method. In the \(x\)-direction:
\begin{align}
\hat{F}^n_{i+\frac{1}{2},j} &=
\begin{cases}
\min\limits_{\rho^n_{i,j} \leq \theta \leq \rho^n_{i+1,j}} F_x(\theta) & \text{if } \rho^n_{i,j} \leq \rho^n_{i+1,j}, \\
\max\limits_{\rho^n_{i+1,j} \leq \theta \leq \rho^n_{i,j}} F_x(\theta) & \text{otherwise},
\end{cases} \\
\hat{F}^n_{i-\frac{1}{2},j} &=
\begin{cases}
\min\limits_{\rho^n_{i-1,j} \leq \theta \leq \rho^n_{i,j}} F_x(\theta) & \text{if } \rho^n_{i-1,j} \leq \rho^n_{i,j}, \\
\max\limits_{\rho^n_{i,j} \leq \theta \leq \rho^n_{i-1,j}} F_x(\theta) & \text{otherwise},
\end{cases}
\end{align}
and analogously in the \(y\)-direction:
\begin{align}
\hat{F}^n_{i,j+\frac{1}{2}} &=
\begin{cases}
\min\limits_{\rho^n_{i,j} \leq \theta \leq \rho^n_{i,j+1}} F_y(\theta) & \text{if } \rho^n_{i,j} \leq \rho^n_{i,j+1}, \\
\max\limits_{\rho^n_{i,j+1} \leq \theta \leq \rho^n_{i,j}} F_y(\theta) & \text{otherwise},
\end{cases} \\
\hat{F}^n_{i,j-\frac{1}{2}} &=
\begin{cases}
\min\limits_{\rho^n_{i,j-1} \leq \theta \leq \rho^n_{i,j}} F_y(\theta) & \text{if } \rho^n_{i,j-1} \leq \rho^n_{i,j}, \\
\max\limits_{\rho^n_{i,j} \leq \theta \leq \rho^n_{i,j-1}} F_y(\theta) & \text{otherwise}.
\end{cases}
\end{align}

The fluxes \( F_x(\theta) \) and \( F_y(\theta) \) in Eqs.~\eqref{eq:fv_update} are evaluated at frozen coefficients \( \phi_x, \phi_y \). The optimization is carried out with respect to \( \theta \in [\rho_L,\rho_R] \), where \( \rho_L \) and \( \rho_R \) are the left and right states at the interface.

Finally, the time step $\Delta t$ is chosen according to a Courant--Friedrichs--Lewy (CFL) condition:
\begin{equation}
\Delta t = CFL \cdot \min \left( \frac{\Delta x}{\max |U_x|}, \frac{\Delta y}{\max |U_y|} \right),
\end{equation}
where the maxima of $|U_x|$ and $|U_y|$ (see Eqs.~\eqref{eq:fluxes_x}--\eqref{eq:fluxes_y}) are taken over all grid points. In all simulations, a Courant number of $CFL = 0.25$ was adopted.

\section{Crowd dynamics paradigm: optimal model selection}
\label{app:results_insights}

\begin{table}[ht]
\centering
\setlength{\tabcolsep}{2pt}
{\rowcolors{1}{white}{cyan!8} 
\small
\begin{tabularx}{\textwidth}{lCCC}
\toprule
\textbf{Model} 
& $\boldsymbol{\varepsilon_2}$ $(\times 10^{-3})$
& $\boldsymbol{\varepsilon_2^{\,r}}$ $(\times 10^{-1})$
& $\boldsymbol{W_1}$ $(\times 10^{-6})$ \\
\midrule
POD-MVAR $(d=18,\ l=5)$ 
& $1.60~(1.00,2.20)$ 
& $1.07~(0.70,1.42)$  
& $7.15~(3.89,11.60)$ \\

POD-MVAR $(d=20,\ l=5)$ 
& $1.40~(1.00,1.90)$ 
& $0.97~(0.66,1.12)$  
& $6.22~(3.74,9.49)$ \\

POD-MVAR $(d=22,\ l=5)$ 
& $1.40~(0.91,1.80)$ 
& $0.94~(0.59,1.26)$  
& $6.47~(3.82,9.52)$ \\

POD-MVAR $(d=22, l=1)$ & $27.3~(24.4,30.2)$ 
& $18.7~(17.0,20.6)$ 
& $122.0~(106.8,140.9)$\\

POD-SINDy $(d=18,\ \mathrm{p}(1)+f)$ 
& $2.53~(1.78,3.59)$ 
& $1.69~(1.22,2.34)$ 
& $10.06~(7.78,13.38)$ \\

POD-SINDy $(d=20,\ \mathrm{p}(1)+f)$ 
& $2.58~(1.81,3.70)$ 
& $1.71~(1.24,2.41)$ 
& $10.29~(7.75,13.86)$ \\

POD-SINDy $(d=22,\ \mathrm{p}(1)+f)$ 
& $2.46~(1.60,3.57)$ 
& $1.64~(1.10,2.34)$ 
& $8.75~(6.31,12.25)$ \\


DMs-MVAR $(d=7,\ l=8)$ 
& $1.30~(0.42,2.30)$ 
& $0.87~(0.31,1.50)$  
& $2.24~(0.56,4.32)$ \\

DMs-MVAR $(d=10,\ l=8)$ 
& $1.00~(0.37,1.70)$ 
& $0.67~(0.26,1.22)$  
& $1.67~(0.50,3.18)$ \\


DMs-SINDy $(d=7,\ \mathrm{pnc}(2))$ 
& $1.44~(0.63,2.48)$ 
& $0.97~(0.43,1.67)$ 
& $4.63~(0.70,11.84)$ \\

DMs-SINDy $(d=10,\ \mathrm{pnc}(2))$ 
& $1.96~(0.78,3.33)$ 
& $1.32~(0.53,2.22)$ 
& $3.86~(1.02,8.08)$ \\
\bottomrule
\end{tabularx}
} 
\caption{Crowd dynamics paradigm: summary of the manifold-informed ROMs performance on the training set \(X_{\text{train}}\). Columns report absolute $\ell_2$ error ($\varepsilon_2$), relative $\ell_2$ error ($\varepsilon_2^{\,r}$), and Wasserstein--1 distance ($W_1$) in terms of average over time and 10$^{\text{th}}$–90$^{\text{th}}$ percentiles.}
\label{tab:rom_metrics_train}
\end{table}

\begin{table}[ht]
\centering
\setlength{\tabcolsep}{2pt}
{
\rowcolors{2}{cyan!8}{white} 
\small
\begin{tabularx}{\textwidth}{lCCC}  
\toprule
\textbf{Model} & $\boldsymbol{\varepsilon_2} (\times 10^{-3})$ & $\boldsymbol{\varepsilon_2^{\,r}} (\times 10^{-1})$ & $\boldsymbol{W_1}  (\times 10^{-6})$ \\
\midrule
POD-MVAR $(d=18,\ l=5)$ & $1.50~(0.95,2.10)$ & $1.01~(0.67,1.32)$  & $5.67~(3.03,9.21)$ \\
\rowcolor{green!18} 
POD-MVAR $(d=20,\ l=5)$ & $1.30~(0.91,1.80)$ & $0.91~(0.57,1.18)$  & $4.75~(2.70,7.30)$ \\
POD-MVAR $(d=22,\ l=5)$ & $1.30~(0.79,1.80)$ & $0.88~(0.53,1.17)$  & $5.03~(2.82,7.96)$ \\

POD-MVAR $(d=22, \ l=1)$ & $27.50~(26.0,29.0)$ & $19.0~(18.0,20.0)$ & $127.1~(118.0,136.0)$\\

POD-SINDy $(d=18, \ p(1)+f)$ & $2.23~(1.85,2.64)$ & $1.52~(1.28,1.79)$ & $9.36~(8.21,10.72)$\\
POD-SINDy $(d=20, \ p(1)+f)$ & $2.21~(1.84,2.66)$ & $1.50~(1.25,1.79)$ & $9.51~(8.26,10.99)$\\
\rowcolor{green!18} 
POD-SINDy $(d=22, \ p(1)+f)$ & $2.03~(1.60,2.49)$ & $1.40~(1.11,1.70)$ & $7.67~(6.48,9.08)$\\
DMs-MVAR $(d=7,\ l=8)$  & $1.20~(0.34, 2.30)$ & $0.81~(0.24,1.49)$  & $2.18~(0.41,4.79)$\\
\rowcolor{green!18} 
DMs-MVAR $(d=10,\ l=8)$ & $ 0.91~(0.32, 1.60)$ & $0.60~(0.23,1.02)$  & $1.39~(0.36,3.11)$\\
\rowcolor{green!18}
DMs-SINDy $(d=7, \ pnc(2))$ & $1.14~(0.68,1.70)$ & $0.77~(0.46,1.14)$ & $2.66~(0.72,6.51)$\\
DMs-SINDy $(d=10, \ pnc(2))$ & $1.31~(0.54,2.24)$ & $0.88~(0.36,1.51)$ & $2.69~(0.90,5.29)$\\
DMs-MVAR $(d=7, \ l=5)$ & $1.21~(0.72,1.73)$ & $0.82~(0.50,1.16)$ & $1.93~(0.79,3.22)$\\
\bottomrule
\end{tabularx}
}
\caption{Crowd dynamics paradigm: summary of manifold-informed ROMs performance on the validation set \(X_{\text{val}}\). Columns report absolute $\ell_2$ error ($\varepsilon_2$), relative $\ell_2$ error ($\varepsilon_2^{\,r}$), and Wasserstein–1 distance ($W_1$) in terms of average over time and 10$^{\text{th}}$–90$^{\text{th}}$ percentiles. Rows shaded in green highlight the selected POD-informed and DMs-informed ROMs.}
\label{tab:rom_metrics_validation}
\end{table}

We detail here the procedure used to select the optimal reduced-order models for the crowd dynamics paradigm. In particular, we investigate the dependence of reconstruction and forecasting accuracy on the number of retained POD modes and DMs coordinates, and compare the performance of the MVAR and SINDy learning algorithms applied to these latent coordinates.

For the MVAR models, the lag order \(l\) was selected using the Bayesian Information Criterion (BIC). The BIC yields \(l=5\) for POD-informed ROMs with \(d=18\), \(20\), and \(22\) modes, and \(l=8\) for DMs-informed ROMs with \(d=7\) and \(10\) coordinates. Tables~\ref{tab:rom_metrics_train} and~\ref{tab:rom_metrics_validation} summarize performance on the training and validation datasets, respectively, in terms of absolute error (\(\varepsilon_2\)), relative error (\(\varepsilon_2^{\,r}\)), and Wasserstein–1 distance ($W_1$). All metrics are averaged over time with 10$^{\text{th}}$–90$^{\text{th}}$ percentiles. For the reconstruction of the DMs-informed ROMs predictions in the high-dimensional space, we have used the $k$-NN algorithm with $k=8$.

Within the DMs-informed ROMs, reconstruction accuracy of the MVARs improves with the number of retained coordinates. In contrast, for the SINDy, more parsimonious models yield better accuracy, with the best performance achieved for \(d=7\). For the MVARs, although \(d=7\) and \(d=10\) exhibit comparable projection-stage accuracy, predictive errors are consistently lower for \(d=10\) across all metrics and datasets.

For the POD-informed ROMs, both MVAR and SINDy models show increasing accuracy as the number of retained modes grows. The best performance is achieved with \(d=20\) modes for the MVAR and \(d=22\) modes for the SINDy-ROM, based on the $W_1$ metric evaluated on both the training and validation datasets. Accordingly, we selected \(d=10\) as the optimal DMs-informed MVAR ROM and \(d=20\) as the optimal POD-informed MVAR ROM for test-set evaluation, and \(d=4\) as the optimal DMs-informed SINDy ROM and \(d=22\) as the optimal POD-informed SINDy ROM. Results obtained with these four selected models have been presented and discussed in Section~\ref{subsec:crowd}. 


\section{Numerical simulation of the fluidic pinball}
\label{app:Navier-Stokes}

The Navier-Stokes equations~\eqref{eq:continuity}--\eqref{eq:momentum_v} are solved by using a projection method: a provisional velocity field is first computed neglecting the pressure gradient, and subsequently projected onto the space of divergence-free fields by solving a pressure Poisson equation. Once the velocity field is obtained, it is used to advect the tracer concentration by solving Eq.~\eqref{eq:advection_f} with the second-order Bell--Colella--Glaz (BCG) scheme. Details on the numerical implementation in the open-source code \texttt{BASILISK} can be found in Popinet~\cite{Popinet2003}. All computations are performed on a uniform Cartesian grid with spacing $\Delta x = \Delta y = 0.2 D$, corresponding to $N = N_x \times N_y = 15124$ grid cells. As shown in \cite{DellaPia_diffusion_2024}, this resolution is sufficient to ensure grid-independent results in the periodic flow regime of the fluidic pinball.

The computational domain is a rectangle of dimensions $L_x = 52D$ and $L_y = 12D$ ($D = 1$ m), from which the interior of the cylinders is excluded (see Fig.~\ref{fig:domain}(b) in Section~\ref{sec:pinball}). A Cartesian coordinate system $\mathcal{O}xy$ is adopted, with the origin located at the midpoint between the two rear cylinders. Since the cylinders are fixed in the present study, no-slip boundary conditions are enforced on their surfaces. At the inlet (left boundary), a uniform velocity profile is prescribed ($u = U_\infty$, $v = 0$), while a standard free-outflow condition is imposed at the outlet (right boundary). Homogeneous Neumann boundary conditions are applied on the remaining boundaries.

The initial velocity field is prescribed as
\begin{equation}
\label{eq:init}
u(t=0) = U_\infty + u'_0, \qquad v(t=0) = v'_0,
\end{equation}
where $u'_0$ and $v'_0$ are random perturbations modeled as white noise with amplitude equal to $10\%$ of the free-stream velocity ($U_\infty = 1$ m/s). The tracer concentration is initially set to zero throughout the domain.

After the spatio-temporal periodic oscillations are fully established, the tracer is initialized as a two-dimensional Gaussian distribution,
\begin{equation}
C_0(x,y) = \bar{C}_0 
\exp\!\left[-\frac{(x-x_0)^2}{2\sigma_x^2} - \frac{(y-y_0)^2}{2\sigma_y^2}\right],
\label{eq:initial_condition_pinball}
\end{equation}
as in the previous case study. The total initial tracer mass, defined as the spatial integral of $C_0(x,y)$ over the domain (see Eq.~\eqref{eq_mass} in \ref{app:numerical_scheme}), is kept constant across all simulations by appropriately adjusting the amplitude $\bar{C}_0$.

To construct the dataset, $N_{ic} = 80$ simulations are performed with different initial tracer distributions, obtained by varying the parameters within the ranges \(x_0 \in [14, 16]\) m, \(y_0 \in [-1, 1]\) m, and \(\sigma_x, \sigma_y \in [4,5]\) m. Since the passive tracer is the mass-preserving quantity of interest in this case study, each simulation is advanced up to $t_f = 24$ s, beyond which the tracer leaves the domain and mass conservation is no longer satisfied.
For each initial condition, $N_t = 120$ snapshots of the tracer field are stored at uniform time intervals $\Delta t = 0.2$ s. The snapshots are reshaped into column vectors $\mathbf{x}_m \in \mathbb{R}^N$ and assembled into the snapshot matrix $\mathbf{X} = \{\mathbf{x}_m\}_{m=1,\dots,M} \in \mathbb{R}^{N \times M}$, with $M = N_t \cdot N_{ic}$.

\section{Fluid dynamics paradigm: optimal model selection}
\label{app:results_insights_fluid}

\begin{table}[ht]
\centering
\setlength{\tabcolsep}{2pt}
{
\rowcolors{2}{cyan!8}{white} 
\small
\begin{tabularx}{\textwidth}{lCCC}  
\toprule
\textbf{Model} & $\boldsymbol{\varepsilon_2} (\times 10^{-3})$ & $\boldsymbol{\varepsilon_2^{\,r}} (\times 10^{-1})$ & $\boldsymbol{W_1}  (\times 10^{-6})$ \\
\midrule
POD-MVAR $(d=16,l=3)$ & $2.16~(1.66,2.80)$ & $1.24~(0.99,1.56)$ & $6.16~(3.60,9.41)$\\

POD-MVAR $(d=18,l=2)$ & $3.13~(2.64,3.84)$ & $1.80~(1.57,2.18)$ & $12.36~(10.07,15.11)$\\

POD-MVAR $(d=20,l=2)$ & $3.82~(3.41,4.39)$ & $2.20~(2.01,2.48)$ & $17.11~(14.78,19.70)$\\

POD-SINDy $(d=16,p(1)+f)$ & $2.96~(2.41,3.71)$ & $1.71~(1.44,2.10)$ & $7.52~(4.65,11.10)$\\

POD-SINDy $(d=18,p(1)+f)$ & $2.76~(2.21,3.57)$ & $1.59~(1.31,2.01)$ & $7.30~(4.68,10.37)$\\

POD-SINDy $(d=20,p(1)+f)$ & $2.69~(2.23,3.39)$ & $1.55~(1.32,1.91)$ & $6.85~(4.53,9.55)$\\


DMs-MVAR $(d=4, l=5)$ & $2.33~(1.05,3.78)$ & $1.34~(0.62,2.16)$ & $2.50~(0.78,4.55)$\\

DMs-MVAR $(d=6, l=3)$ & $1.94~(0.89,3.11)$ & $1.12~(0.52,1.77)$ & $2.14~(0.90,3.61)$\\


DMs-SINDy $(d=4, p(3))$ & $1.97~(0.68,3.64)$ & $1.14~(0.40,2.11)$ & $1.82~(0.67,3.01)$\\

DMs-SINDy $(d=6, p(1) + f)$ & $2.57~(1.64,3.78)$ & $1.48~(0.95,2.15)$ & $4.63~(1.74,7.24)$\\

\bottomrule
\end{tabularx}
}
\caption{Fluid dynamics paradigm: summary of manifold-informed ROMs performance on the training set \(X_{\text{train}}\). Columns report absolute $\ell_2$ error ($\varepsilon_2$), relative $\ell_2$ error ($\varepsilon_2^{\,r}$), and Wasserstein–1 distance ($W_1$) in terms of average over time and 10$^{\text{th}}$–90$^{\text{th}}$ percentiles. Rows shaded in green highlight the selected POD-informed and DMs-informed ROMs.}
\label{tab:rom_metrics_train_new}
\end{table}

\begin{table}[ht]
\centering
\setlength{\tabcolsep}{2pt}
{
\rowcolors{2}{cyan!8}{white} 
\small
\begin{tabularx}{\textwidth}{lCCC}  
\toprule
\textbf{Model} & $\boldsymbol{\varepsilon_2} (\times 10^{-3})$ & $\boldsymbol{\varepsilon_2^{\,r}} (\times 10^{-1})$ & $\boldsymbol{W_1}  (\times 10^{-6})$ \\
\midrule
\rowcolor{green!18} 
POD-MVAR $(d=16, l=3)$ & $3.56~(2.66,4.49)$ & $2.05~(1.54,2.55)$ & $8.55~(4.26,15.01)$\\

POD-MVAR $(d=18, l=2)$ & $4.28~(3.25,5.16)$ & $2.48~(1.87,3.05)$ & $14.65~(11.08,20.26)$\\

POD-MVAR $(d=20, l=2)$ & $4.89~(3.89,5.79)$ & $2.84~(2.25,3.41)$ & $19.14~(15.48,24.48)$\\

POD-SINDy $(d=16,p(1) + f)$ & $4.03~(3.19,4.99)$ & $2.33~(1.84,2.82)$ & $9.21~(4.58,17.36)$\\

POD-SINDy $(d=18,p(1) + f)$ & $3.94~(3.06,4.98)$ & $2.28~(1.76,2.82)$ & $9.24~(4.57,17.21)$\\

\rowcolor{green!18} 
POD-SINDy $(d=20,p(1) + f)$ & $3.59~(2.59,5.16)$ & $2.07~(1.51,2.96)$ & $7.89~(3.75,14.13)$\\


DMs-MVAR $(d=4, l=5)$ & $2.69~(1.40,4.10)$ & $1.56~(0.82,2.35)$ & $3.02~(0.92,5.70)$\\

\rowcolor{green!18} 
DMs-MVAR $(d=6, l=3)$ & $2.21~(1.24,3.22)$ & $1.28~(0.72,1.85)$ & $2.48~(0.97,4.35)$\\


\rowcolor{green!18}
DMs-SINDy $(d=4, p(3))$ & $2.43~(1.13,4.17)$ & $1.41~(0.66,2.40)$ & $2.05~(0.90,3.31)$\\

DMs-SINDy $(d=6, p(1) + f)$ & $2.69~(1.79,3.71)$ & $1.56~(1.03,2.13)$ & $4.72~(1.43,7.36)$\\

\bottomrule
\end{tabularx}
}
\caption{Fluid dynamics paradigm: summary of manifold-informed ROMs performance on the validation set \(X_{\text{val}}\). Columns report absolute $\ell_2$ error ($\varepsilon_2$), relative $\ell_2$ error ($\varepsilon_2^{\,r}$), and Wasserstein–1 distance ($W_1$) in terms of average over time and 10$^{\text{th}}$–90$^{\text{th}}$ percentiles. Rows shaded in green highlight the selected POD-informed and DMs-informed ROMs.}
\label{tab:rom_metrics_validation_new}
\end{table}

We detail here the procedure used to select the optimal reduced-order models (ROMs) for the
fluid dynamics paradigm. In particular, we investigate the dependence of reconstruction and
forecasting accuracy on the number of retained POD modes and DMs coordinates, and compare
the performance of the MVAR and SINDy learning algorithms applied to these latent
coordinates.

For the MVAR models, the lag order $l$ was selected using the Bayesian Information Criterion
(BIC). For the POD-informed models, the BIC selects $l=3$ for $d=16$ modes, while
$l=2$ for $d=18$ and $d=20$. For the DMs-informed models, the BIC selects $l=5$ for
the lower-dimensional embedding ($d=4$), and $l=3$ for $d=6$. Tables~\ref{tab:rom_metrics_train_new} and~\ref{tab:rom_metrics_validation_new} summarize ROM performance on the training and validation datasets, respectively, in
terms of absolute error ($\varepsilon_2$), relative error ($\varepsilon_2^r$), and
Wasserstein--1 distance ($W_1$), reported as time-averaged values with 10$^{\text{th}}$–90$^{\text{th}}$ percentiles. For the reconstruction of the DMs-informed ROMs predictions in the high-dimensional space, we have used the $k$-NN algorithm with $k=6$.

Within the DMs-informed ROMs, the MVAR reconstruction accuracy improves with the number
of retained coordinates, with the best validation performance achieved for $d=6$ and $l=3$
(Table~\ref{tab:rom_metrics_validation_new}). For DMs-informed SINDy, more parsimonious models yield better performance:
the cubic polynomial library $p(3)$ with $d=4$ coordinates attains the lowest $W_1$ on the
validation set, while increasing the latent dimension to $d=6$ (with a $p(1)+f$ library)
degrades accuracy.

For the POD-informed ROMs, MVARs exhibit increasing errors as $d$ grows from 16 to 20, indicating that
retaining additional POD modes does not translate into improved latent-space predictability
for the linear autoregressive model in this setting. Conversely, POD-informed
SINDy benefits from richer POD embeddings: increasing the number of modes reduces the
validation error, with the best performance obtained at $d=20$ using a first-order polynomial
library augmented with Fourier features ($p(1)+f$). Based primarily on the $W_1$ metric evaluated on both the training and validation datasets, we selected $d=6$ as the optimal DMs-informed MVAR ROM and $d=16$ as
the optimal POD-informed MVAR ROM, and $d=4$ as the optimal DMs-informed SINDy ROM and
$d=20$ as the optimal POD-informed SINDy ROM. Results obtained with these four selected
models have been presented and discussed in Section~\ref{subsec:results_fluid}.


\bibliography{mybibfile}

@article{Rudy2017,
  title={Data-driven discovery of partial differential equations},
  author={Rudy, Samuel H. and Brunton, Steven L. and Proctor, Joshua L. and Kutz, J. Nathan},
  journal={Science Advances},
  volume={3},
  number={4},
  year={2017},
}

@article{Benner2020,
  title={Operator inference for non-intrusive model reduction of systems with non-polynomial nonlinear terms},
  author={Benner, Peter and Goyal, Pawan and Kramer, Boris and Peherstorfer, Benjamin and Willcox, Karen},
  journal={Computer Methods in Applied Mechanics and Engineering},
  volume={372},
  pages={113433},
  year={2020},
}

@article{Kramer2024,
  title={Learning Nonlinear Reduced Models from Data with Operator Inference},
  author={Kramer, Boris and Peherstorfer, Benjamin and Willcox, Karen E.},
  journal={Annual Review of Fluid Mechanics},
  volume={56},
  pages={521--548},
  year={2024}
}

@article{wesserstein,
author = {Niklas Kolbe},
year= {2025},
title={Wasserstein distance},
journal= {https://github.com/nklb/wasserstein-distance, GitHub.},
}

@book{villani2009optimal,
  title={Optimal Transport: Old and New},
  author={Villani, C{\'e}dric},
  year={2009},
  publisher={Springer-Verlag Berlin Heidelberg},
  isbn={9783540710493},
  address={Berlin, Germany},
  series={Grundlehren der mathematischen Wissenschaften},
  volume={338}
}

@article{coifman2006geometric,
	author    = {Coifman, R. R. and Lafon, S.},
	title     = {Geometric harmonics: a novel tool for multiscale out-of-sample extension of empirical functions},
	journal   = {Appl. Comput. Harmon. Anal.},
	volume    = {21},
	number    = {1},
	pages     = {31--52},
	year      = {2006},
}

@article{singer2009detecting,
  title={Detecting intrinsic slow variables in stochastic dynamical systems by anisotropic diffusion maps},
  author={Singer, Amit and Erban, Radek and Kevrekidis, Ioannis G and Coifman, Ronald R},
  journal={Proceedings of the National Academy of Sciences},
  volume={106},
  number={38},
  pages={16090--16095},
  year={2009},
  publisher={National Acad Sciences}
}

@article{gallos2021construction,
  title={Construction of embedded fMRI resting-state functional connectivity networks using manifold learning},
  author={Gallos, Ioannis K and Galaris, Evangelos and Siettos, Constantinos I},
  journal={Cognitive neurodynamics},
  volume={15},
  number={4},
  pages={585--608},
  year={2021},
  publisher={Springer}
}

@article{nadler2006diffusion,
  title={Diffusion maps, spectral clustering and reaction coordinates of dynamical systems},
  author={Nadler, Boaz and Lafon, St{\'e}phane and Coifman, Ronald R and Kevrekidis, Ioannis G},
  journal={Applied and Computational Harmonic Analysis},
  volume={21},
  number={1},
  pages={113--127},
  year={2006},
  publisher={Elsevier}
}

@article{greenshields1934study,
  author    = {Greenshields, B. D.},
  title     = {A study of traffic capacity},
  journal   = {Proceedings of the Highway Research Board},
  volume    = {14},
  pages     = {448--477},
  year      = {1934}
}

@article{chen2021solving,
title = {Solving and learning nonlinear PDEs with Gaussian processes},
journal = {Journal of Computational Physics},
volume = {447},
pages = {110668},
year = {2021},
issn = {0021-9991},
author = {Yifan Chen and Bamdad Hosseini and Houman Owhadi and Andrew M. Stuart},
}

@article{pang2020physics,
title = {nPINNs: Nonlocal physics-informed neural networks for a parametrized nonlocal universal Laplacian operator. Algorithms and applications},
journal = {Journal of Computational Physics},
volume = {422},
pages = {109760},
year = {2020},
issn = {0021-9991},
author = {G. Pang and M. D'Elia and M. Parks and G.E. Karniadakis},
}

@article{bellomo2022towards,
  author  = {N. Bellomo and L. Gibelli and A. Quaini and A. Reali},
  title   = {Towards a Mathematical Theory of Behavioral Human Crowds},
  journal = {Mathematical Models and Methods in Applied Sciences},
  volume  = {32},
  pages   = {321--358},
  year    = {2022}
}

@article{Sirovich,
title={Turbulence and the Dynamics of Coherent Structures, Parts I–III},
author={Sirovich, L.},
journal={Quarterly of Applied Mathematics},
volume={45},
number={3},
pages={561–571},
year={1987},
}

@article{coscia_canavesio,
    author = {Coscia, V. and Canavesio, C.},
    title = {First-order macroscopic modelling of human crowd dynamics},
    journal = {Math. Models Methods Appl. Sci.},
    volume = {18},
    pages = {1217–-1247},
    year = {2008},
}

@article{bellomo2023human,
    author = {Bellomo, N. and Liao, J. and Quaini, A. and Russo, L. and Siettos, C.},
    title = {Human Behavioral Crowds Review, Critical Analysis and Research Perspectives},
    journal = {Mathematical Models and Methods in Applied Sciences},
    volume = {33},
    pages = {1--49},
    month = {Jun},
    year = {2023},
    doi = {10.1142/S0218202523500379},
}

@article{lighthill1955kinematic,
    author = {Lighthill, M. J. and Whitham, G. B.},
    title = {On Kinematic Waves. II. A Theory of Traffic Flow on Long Crowded Roads},
    journal = {Proceedings of the Royal Society of London. Series A, Mathematical and Physical Sciences},
    volume = {229},
    number = {1178},
    pages = {317--345},
    year = {1955},
    publisher = {The Royal Society},
}

@article{hughes2002continuum,
    author = {Hughes, R. L.},
    title = {A Continuum Theory for the Flow of Pedestrians},
    journal = {Transportation Research Part B: Methodological},
    volume = {36},
    number = {6},
    pages = {507--535},
    month = {Jul},
    year = {2002},
    doi = {10.1016/S0191-2615(01)00015-7},
}

@article{bellomo2008modelling,
  title={On the modelling crowd dynamics from scaling to hyperbolic macroscopic models},
  author={Bellomo, N and Dogb{\'e}, Christian},
  journal={Mathematical Models and Methods in Applied Sciences},
  volume={18},
  number={supp01},
  pages={1317--1345},
  year={2008},
  month={August},
  doi={10.1142/S0218202508003054},
}

@book{Lutkepohl2005,
  author    = {L{\"u}tkepohl, Helmut},
  title     = {New Introduction to Multiple Time Series Analysis},
  publisher = {Springer},
  year      = {2005},
  edition   = {1st},
  isbn      = {978-3-540-27752-3},
}

@article{DickeyFuller1979,
  author  = {Dickey, David A. and Fuller, Wayne A.},
  title   = {Distribution of the Estimators for Autoregressive Time Series with a Unit Root},
  journal = {Journal of the American Statistical Association},
  volume  = {74},
  number  = {366},
  pages   = {427--431},
  year    = {1979},
  doi     = {10.2307/2286348}
}

@article{Qu2022Learning,
  title={Learning time-dependent PDEs with a linear and nonlinear separate convolutional neural network},
  author={Qu, J. and Cai, W. and Zhao, Y.},
  journal={Journal of Computational Physics},
  volume={453},
  pages={110928},
  year={2022},
  publisher={Elsevier},
  doi={10.1016/j.jcp.2021.110928},
}

@book{cristiani2014multiscale,
  title     = {Multiscale Modeling of Pedestrian Dynamics},
  author    = {Cristiani, Emiliano and Piccoli, Benedetto and Tosin, Andrea},
  series    = {MS\&A: Modeling, Simulation and Applications},
  volume    = {12},
  publisher = {Springer},
  address   = {Cham},
  year      = {2014},
  isbn      = {978-3-319-06619-6},
  doi       = {10.1007/978-3-319-06620-2},
  url       = {https://link.springer.com/book/10.1007/978-3-319-06620-2}
}

@article{titi1990approximate,
  title={On approximate inertial manifolds to the Navier-Stokes equations},
  author={Titi, E. S.},
  journal={Journal of mathematical analysis and applications},
  volume={149},
  number={2},
  pages={540--557},
  year={1990},
  publisher={Elsevier}
}

@article{kevrekidis2003equation,
  title={EQUATION-FREE, COARSE-GRAINED MULTISCALE COMPUTATION: ENABLING MICROSCOPIC SIMULATORS TO PERFORM SYSTEM-LEVEL ANALYSIS},
  author={Kevrekidis, I. G. and Gear, C. W. and Hyman, J. M. and Kevrekidis, P. G. and Runborg, O. and Theodoropoulos, C.},
  journal={Communications in Mathematical Sciences},
  volume={1},
  number={4},
  pages={715--762},
  year={2003},
  publisher={International Press of Boston, Inc.}
}

@book{temam1995navier,
  title={Navier--Stokes equations and nonlinear functional analysis},
  author={Temam, R.},
  year={1995},
  publisher={SIAM}
}

@book{quarteroni2014reduced,
  title={Reduced order methods for modeling and computational reduction},
  author={Quarteroni, A. and Rozza, G.},
  volume={9},
  year={2014},
  publisher={Springer}
}

@article{DellaPia_diffusion_2024,
	title={Learning the latent dynamics of fluid flows from high-fidelity
	numerical simulations using parsimonious diffusion maps},
	author={Della Pia, A. and Patsatzis, D. G. and Russo, L. and Siettos, C.},
	journal={Physics of Fluids},
	volume={36},
	number={105187},
	pages={1--19},
	year={2024},
	publisher={AIP Publishing},
}

@article{Patsatzis_2023,
	title={Data-driven control of agent-based models: An Equation/Variable-free machine learning approach},
	author={Patsatzis, D. and Russo, L. and Kevrekidis, I. G. and Siettos, C.},
	journal={Journal of Computational Physics},
	volume={478},
	number={111953},
	year={2023},
}

@article{evangelou2022double,
	author    = {Evangelou, N. and Dietrich, F. and Chiavazzo, E. and Lehmberg, D. and Meila, M. and Kevrekidis, I. G.},
	title     = {Double diffusion maps and their latent harmonics for scientific computations in latent space},
	journal   = {Journal of Computational Physics},
	volume    = {485},
	number    = {112072},
	year      = {2023},
}

@article{papaioannou2022time,
	author    = {Papaioannou, P. G. and Talmon, R. and Kevrekidis, I. G. and Siettos, C.},
	title     = {Time-series forecasting using manifold learning, radial basis function interpolation, and geometric harmonics},
	journal   = {Chaos},
	volume    = {32},
	number    = {8},
	pages     = {083113},
	year      = {2022},
}

@article{chiavazzo2014reduced,
	author    = {Chiavazzo, E. and Gear, C. W. and Dsilva, C. J. and Rabin, N. and Kevrekidis, I. G.},
	title     = {Reduced models in chemical kinetics via nonlinear data-mining},
	journal   = {Processes},
	volume    = {2},
	number    = {1},
	pages     = {112--140},
	year      = {2014},
}

@book{nystrom1929uber,
	author    = {Nyström, E. J.},
	title     = {Über die praktische Auflösung von linearen Integralgleichungen mit Anwendungen auf Randwertaufgaben der Potentialtheorie},
	publisher = {Akademische Buchhandlung},
	year      = {1929},
}

@article{holiday2019manifold,
	author    = {Holiday, A. and Kooshkbaghi, M. and Bello-Rivas, J. M. and Gear, C. W. and Zagaris, A. and Kevrekidis, I. G.},
	title     = {Manifold learning for parameter reduction},
	journal   = {J. Comput. Phys.},
	volume    = {392},
	number    = {1},
	pages     = {419--431},
	year      = {2019},
}

@article{dsilva2018parsimonious,
	author    = {Dsilva, C. J. and Talmon, R. and Coifman, R. R. and Kevrekidis, I. G.},
	title     = {Parsimonious representation of nonlinear dynamical systems through manifold learning: a chemotaxis case study},
	journal   = {Appl. Comput. Harmon. Anal.},
	volume    = {44},
	number    = {3},
	pages     = {759--773},
	year      = {2018},
}

@article{coifman2008diffusion,
	author    = {Coifman, R. R. and Kevrekidis, I. G. and Lafon, S. and Maggioni, M. and Nadler, B.},
	title     = {Diffusion maps, reduction coordinates, and low dimensional representation of stochastic systems},
	journal   = {Multiscale Model. Simul.},
	volume    = {7},
	number    = {2},
	pages     = {842--864},
	year      = {2008},
}

@article{coifman2005geometric,
	author    = {Coifman, R. R. and Lafon, S. and Lee, A. B. and Maggioni, M. and Nadler, B. and Warner, F. and Zucker, S. W.},
	title     = {Geometric diffusions as a tool for harmonic analysis and structure definition of data: diffusion maps},
	journal   = {Proc. Natl. Acad. Sci.},
	volume    = {102},
	number    = {21},
	pages     = {7426--7431},
	year      = {2005},
	publisher = {National Academy of Sciences},
}

@article{Kevrekidis_DM,
  	title={Physics-agnostic and physics-infused machine learning for thin films flows: modelling, and predictions from small data},
  	author={Martin-Linares, C. P. and Psarellis,  Y. M. and Karapetsas, G. and Koronaki, E. D. and Kevrekidis, I. G.},
  	journal={Journal of Fluid Mechanics},
  	volume={975},
  	number={A41},
  	pages={1--22},
  	year={2023},
  }

@article{sauer1991embedology,
  title={Embedology},
  author={Sauer, Tim and Yorke, James A and Casdagli, Martin},
  journal={Journal of statistical Physics},
  volume={65},
  number={3},
  pages={579--616},
  year={1991},
  publisher={Springer}
}

@article{Zhao2004FastSweeping,
  author  = {Zhao, Hongkai},
  title   = {A Fast Sweeping Method for Eikonal Equations},
  journal = {Mathematics of Computation},
  volume  = {74},
  number  = {250},
  pages   = {603--627},
  year    = {2004},
  doi     = {10.1090/S0025-5718-04-01678-3},
}

@article{Champion_SINDy,
	title={Data-driven discovery of coordinates and governing equations},
	author={Champion, K. and  Lusch, B. and Kutz, J. N. and Brunton, S. L.},
	journal={Proceedings of the National Academy of Sciences},
	volume={116},
	number={45},
	pages={22445-22451},
	year={2019},
}

@article{Deng_Noack_2020,
	title={Low-order model for successive bifurcations of the fluidic pinball},
	author={Deng, N. and Noack, B. R. and Morzynski, M. and Pastur, L. R.},
	journal={Journal of Fluid Mechanics},
	volume={884},
	number={A37},
	pages={1-41},
	year={2020},
}

@article{Popinet2003,
	title={Gerris: a tree-based adaptive solver for the incompressible \text{E}uler equations in complex geometries},
	author={Popinet, S.},
	journal={Journal of Computational Physics},
	volume={190},
	number={2},
	pages={572--600},
	year={2003},
}

@article{Galaris2022,
	title={Numerical Bifurcation Analysis of PDEs From Lattice Boltzmann Model Simulations: a Parsimonious Machine Learning Approach},
	author={Galaris, E. and Fabiani, G. and Gallos, I. and Kevrekidis, I. and Siettos, C.},
	journal={Journal of Scientific Computing},
	volume={92},
	number={34},
	pages={1--30},
	year={2022},
}

@article{coifman2006diffusion,
  title={Diffusion maps},
  author={Coifman, Ronald R and Lafon, St{\'e}phane},
  journal={Applied and computational harmonic analysis},
  volume={21},
  number={1},
  pages={5--30},
  year={2006},
  publisher={Elsevier}
}

@article{brunton2016discovering,
  title={Discovering governing equations from data by sparse identification of nonlinear dynamical systems},
  author={Brunton, S. L. and Proctor, J. L. and Kutz, J. N.},
  journal={Proceedings of the national academy of sciences},
  volume={113},
  number={15},
  pages={3932--3937},
  year={2016},
  publisher={National Acad Sciences},
  doi={10.1073/pnas.1517384113}
}

@article{Lee2020,
  title={Coarse-scale PDEs from fine-scale observations via machine learning},
  author={Lee, S. and Kooshkbaghi, M. and Spiliotis, K. and Siettos, C. I. and Kevrekidis, I. G.},
  journal={Chaos: An Interdisciplinary Journal of Nonlinear Science},
  volume={30},
  number={1},
  pages={013141},
  year={2020},
  publisher={AIP Publishing LLC},
  doi={10.1063/1.5126869}
}

@book{constantin2012integral,
  title={Integral manifolds and inertial manifolds for dissipative partial differential equations},
  author={Constantin, P. and Foias, C. and Nicolaenko, B. and Temam, R.},
  volume={70},
  year={2012},
  publisher={Springer Science \& Business Media}
}

@article{floryan2022data,
  title={Data-driven discovery of intrinsic dynamics},
  author={Floryan, D. and Graham, M. D.},
  journal={Nature Machine Intelligence},
  volume={4},
  number={12},
  pages={1113--1120},
  year={2022},
  publisher={Nature Publishing Group UK London},
  doi={10.1038/s42256-022-00575-4}
}

@article{fabiani2024task,
  title={Task-oriented machine learning surrogates for tipping points of agent-based models},
  author={Fabiani, G. and Evangelou, N. and Cui, T. and Bello-Rivas, J. M. and Martin-Linares, C. P. and Siettos, C. and Kevrekidis, I. G.},
  journal={Nature communications},
  volume={15},
  number={1},
  pages={4117},
  year={2024},
  publisher={Nature Publishing Group UK London}
}

@article{gallos2024data,
  title={Data-driven modelling of brain activity using neural networks, diffusion maps, and the Koopman operator},
  author={Gallos, I. K. and Lehmberg, D. and Dietrich, F. and Siettos, C.},
  journal={Chaos: An Interdisciplinary Journal of Nonlinear Science},
  volume={34},
  number={1},
  year={2024},
  publisher={AIP Publishing}
}

@article{tenenbaum2000global,
  title={A global geometric framework for nonlinear dimensionality reduction},
  author={Tenenbaum, Joshua B and Silva, Vin de and Langford, John C},
  journal={science},
  volume={290},
  number={5500},
  pages={2319--2323},
  year={2000},
  publisher={American Association for the Advancement of Science}
}

@article{koronaki2024nonlinear,
  title={Nonlinear dimensionality reduction then and now: AIMs for dissipative PDEs in the ML era},
  author={Koronaki, E. D. and Evangelou, N. and Martin-Linares, C. P. and Titi, E. S. and Kevrekidis, I. G.},
  journal={Journal of Computational Physics},
  volume={506},
  pages={112910},
  year={2024},
  publisher={Elsevier}
}

@article{li2023deep,
  title={Deep neural networks for simulating complex dynamical systems: A framework combining neural operators and recurrent neural networks},
  author={Li, Xiaofeng and Wang, Wei and Liu, Qiang and others},
  journal={arXiv preprint arXiv:2307.07331},
  year={2023},
  url={https://arxiv.org/abs/2307.07331}
}

@incollection{goswami2023physics,
  title={Physics-informed deep neural operator networks},
  author={Goswami, Somdatta and Bora, Aniruddha and Yu, Yue and Karniadakis, George Em},
  booktitle={Machine learning in modeling and simulation: methods and applications},
  pages={219--254},
  year={2023},
  publisher={Springer}
}

@article{li2020fourier,
  title={Fourier neural operator for parametric partial differential equations},
  author={Li, Zongyi and Kovachki, Nikola and Azizzadenesheli, Kamyar and Liu, Burigede and Bhattacharya, Kaushik and Stuart, Andrew and Anandkumar, Anima},
  journal={arXiv preprint arXiv:2010.08895},
  year={2020}
}

@article{peyvan2024riemannonets,
  title={RiemannONets: Interpretable neural operators for Riemann problems},
  author={Peyvan, Ahmad and Oommen, Vivek and Jagtap, Ameya D and Karniadakis, George Em},
  journal={Computer Methods in Applied Mechanics and Engineering},
  volume={426},
  pages={116996},
  year={2024},
  publisher={Elsevier}
}

@article{zappala2024learning,
  title={Learning integral operators via neural integral equations},
  author={Zappala, Emanuele and Fonseca, Antonio Henrique de Oliveira and Caro, Josue Ortega and Moberly, Andrew Henry and Higley, Michael James and Cardin, Jessica and Dijk, David van},
  journal={Nature Machine Intelligence},
  volume={6},
  number={9},
  pages={1046--1062},
  year={2024},
  publisher={Nature Publishing Group UK London}
}

@article{fabiani2025randonets,
  title={RandONets: Shallow networks with random projections for learning linear and nonlinear operators},
  author={Fabiani, Gianluca and Kevrekidis, Ioannis G and Siettos, Constantinos and Yannacopoulos, Athanasios N},
  journal={Journal of Computational Physics},
  volume={520},
  pages={113433},
  year={2025},
  publisher={Elsevier}
}

@article{deane1991low,
  title={Low-dimensional models for complex geometry flows: Application to grooved channels and circular cylinders},
  author={Deane, A. E. and Kevrekidis, I. G. and Karniadakis, G. E. and Orszag, S. A.},
  journal={Physics of Fluids A: Fluid Dynamics},
  volume={3},
  number={10},
  pages={2337--2354},
  year={1991},
  publisher={American Institute of Physics}
}

@article{della2025surrogate,
  title={Surrogate normal-forms for the numerical bifurcation and stability analysis of navier-stokes flows via machine learning},
  author={Della Pia, Alessandro and Patsatzis, Dimitrios G and Rozza, Gianluigi and Russo, Lucia and Siettos, Constantinos},
  journal={arXiv preprint arXiv:2506.21275},
  year={2025}
}

@article{conti2023reduced,
  title={Reduced order modeling of parametrized systems through autoencoders and SINDy approach: continuation of periodic solutions},
  author={Conti, Paolo and Gobat, Giorgio and Fresca, Stefania and Manzoni, Andrea and Frangi, Attilio},
  journal={Computer Methods in Applied Mechanics and Engineering},
  volume={411},
  pages={116072},
  year={2023},
  publisher={Elsevier}
}

@article{faraji2025shallow,
  title={Shallow recurrent decoder for reduced order modeling of E$\times$ B plasma dynamics},
  author={Faraji, Farbod and Reza, Maryam and Kutz, J Nathan},
  journal={Machine Learning: Science and Technology},
  volume={6},
  number={2},
  pages={025024},
  year={2025},
  publisher={IOP Publishing}
}

@article{kramer1991nonlinear,
  title={Nonlinear principal component analysis using autoassociative neural networks},
  author={Kramer, Mark A},
  journal={AIChE journal},
  volume={37},
  number={2},
  pages={233--243},
  year={1991},
  publisher={Wiley Online Library}
}

@article{vlachas2022multiscale,
  title={Multiscale simulations of complex systems by learning their effective dynamics},
  author={Vlachas, Pantelis R and Arampatzis, Georgios and Uhler, Caroline and Koumoutsakos, Petros},
  journal={Nature Machine Intelligence},
  volume={4},
  number={4},
  pages={359--366},
  year={2022},
  publisher={Nature Publishing Group UK London}
}

@article{charalampopoulos2022uncertainty,
  title={Uncertainty quantification of turbulent systems via physically consistent and data-informed reduced-order models},
  author={Charalampopoulos, A and Sapsis, T},
  journal={Physics of Fluids},
  volume={34},
  number={7},
  year={2022},
  publisher={AIP Publishing}
}

@article{krischer1993model,
  title={Model identification of a spatiotemporally varying catalytic reaction},
  author={Krischer, K and Rico-Martinez, R and Kevrekidis, IG and Rotermund, HH and Ertl, G and Hudson, JL},
  journal={AIChE Journal},
  volume={39},
  number={1},
  pages={89--98},
  year={1993},
  publisher={Wiley Online Library}
}

@article{graham1996alternative,
  title={Alternative approaches to the Karhunen-Loeve decomposition for model reduction and data analysis},
  author={Graham, Michael D and Kevrekidis, Ioannis G},
  journal={Computers \& chemical engineering},
  volume={20},
  number={5},
  pages={495--506},
  year={1996},
  publisher={Elsevier}
}

@article{constante2024data,
  title={Data-driven state-space and Koopman operator models of coherent state dynamics on invariant manifolds},
  author={Constante-Amores, C Ricardo and Graham, Michael D},
  journal={Journal of Fluid Mechanics},
  volume={984},
  pages={R9},
  year={2024},
  publisher={Cambridge University Press}
}

@article{shvartsman1998low,
  title={Low-dimensional approximation and control of periodic solutions in spatially extended systems},
  author={Shvartsman, Stanislav Y and Kevrekidis, IG},
  journal={Physical Review E},
  volume={58},
  number={1},
  pages={361},
  year={1998},
  publisher={APS}
}

@article{dylewsky2022principal,
  title={Principal component trajectories for modeling spectrally continuous dynamics as forced linear systems},
  author={Dylewsky, Daniel and Kaiser, Eurika and Brunton, Steven L and Kutz, J Nathan},
  journal={Physical Review E},
  volume={105},
  number={1},
  pages={015312},
  year={2022},
  publisher={APS}
}

@article{aubry1993preserving,
  title={Preserving symmetries in the proper orthogonal decomposition},
  author={Aubry, Nadine and Lian, Wen-Yu and Titi, Edriss S},
  journal={SIAM Journal on Scientific Computing},
  volume={14},
  number={2},
  pages={483--505},
  year={1993},
  publisher={SIAM}
}

@article{axaas2023model,
  title={Model reduction for nonlinearizable dynamics via delay-embedded spectral submanifolds},
  author={Ax{\aa}s, Joar and Haller, George},
  journal={Nonlinear Dynamics},
  volume={111},
  number={24},
  pages={22079--22099},
  year={2023},
  publisher={Springer}
}

@article{chin2024enabling,
  title={Enabling equation-free modeling via diffusion maps},
  author={Chin, Tracy and Ruth, Jacob and Sanford, Clayton and Santorella, Rebecca and Carter, Paul and Sandstede, Bj{\"o}rn},
  journal={Journal of Dynamics and Differential Equations},
  volume={36},
  number={Suppl 1},
  pages={415--434},
  year={2024},
  publisher={Springer}
}

@article{alvarez2025next,
  title={Next Generation Equation-Free Multiscale Modelling of Crowd Dynamics via Machine Learning},
  author={Alvarez, Hector Vargas and Patsatzis, Dimitrios G and Russo, Lucia and Kevrekidis, Ioannis and Siettos, Constantinos},
  journal={arXiv preprint arXiv:2508.03926},
  year={2025}
}

@article{soize2024transient,
  title={Transient anisotropic kernel for probabilistic learning on manifolds},
  author={Soize, Christian and Ghanem, Roger},
  journal={Computer Methods in Applied Mechanics and Engineering},
  volume={432},
  pages={117453},
  year={2024},
  publisher={Elsevier}
}

@article{soize2022probabilistic,
  title={Probabilistic learning on manifolds (PLoM) with partition},
  author={Soize, Christian and Ghanem, Roger},
  journal={International Journal for Numerical Methods in Engineering},
  volume={123},
  number={1},
  pages={268--290},
  year={2022},
  publisher={Wiley Online Library}
}

@article{kemeth2018emergent,
  title={An emergent space for distributed data with hidden internal order through manifold learning},
  author={Kemeth, Felix P and Haugland, Sindre W and Dietrich, Felix and Bertalan, Tom and H{\"o}hlein, Kevin and Li, Qianxiao and Bollt, Erik M and Talmon, Ronen and Krischer, Katharina and Kevrekidis, Ioannis G},
  journal={IEEE Access},
  volume={6},
  pages={77402--77413},
  year={2018},
  publisher={IEEE}
}

@article{bollt2007attractor,
  title={Attractor modeling and empirical nonlinear model reduction of dissipative dynamical systems},
  author={Bollt, Erik},
  journal={International Journal of Bifurcation and Chaos},
  volume={17},
  number={04},
  pages={1199--1219},
  year={2007},
  publisher={World Scientific}
}

@article{belkin2003laplacian,
  title={Laplacian eigenmaps for dimensionality reduction and data representation},
  author={Belkin, Mikhail and Niyogi, Partha},
  journal={Neural computation},
  volume={15},
  number={6},
  pages={1373--1396},
  year={2003},
  publisher={MIT Press}
}

@article{belkin2008towards,
  title={Towards a theoretical foundation for Laplacian-based manifold methods},
  author={Belkin, Mikhail and Niyogi, Partha},
  journal={Journal of Computer and System Sciences},
  volume={74},
  number={8},
  pages={1289--1308},
  year={2008},
  publisher={Elsevier}
}

@article{cristiani2014overview,
  title={An overview of the modeling of crowd dynamics},
  author={Cristiani, Emiliano and Piccoli, Benedetto and Tosin, Andrea},
  journal={Multiscale modeling of pedestrian dynamics},
  pages={73--107},
  year={2014},
  publisher={Springer}
}

@article{bellomo2012modeling,
  title={Modeling crowd dynamics from a complex system viewpoint},
  author={Bellomo, Nicola and Piccoli, Benedetto and Tosin, Andrea},
  journal={Mathematical models and methods in applied sciences},
  volume={22},
  number={supp02},
  pages={1230004},
  year={2012},
  publisher={World Scientific}
}

@article{amadori2014existence,
  title={Existence results for Hughes' model for pedestrian flows},
  author={Amadori, Debora and Goatin, Paola and Rosini, Massimiliano D},
  journal={Journal of Mathematical Analysis and applications},
  volume={420},
  number={1},
  pages={387--406},
  year={2014},
  publisher={Elsevier}
}

@incollection{amadori2023mathematical,
  title={The mathematical theory of Hughes’ model: a survey of results},
  author={Amadori, Debora and Andreianov, Boris and Di Francesco, Marco and Fagioli, Simone and Girard, Th{\'e}o and Goatin, Paola and Markowich, Peter and Pietschmann, J-F and Rosini, Massimiliano D and Russo, Giovanni and others},
  booktitle={Crowd Dynamics, Volume 4: Analytics and Human Factors in Crowd Modeling},
  pages={9--53},
  year={2023},
  publisher={Springer}
}

@article{goatin2013wave,
  title={The wave-front tracking algorithm for Hughes' model of pedestrian motion},
  author={Goatin, Paola and Mimault, Matthias},
  journal={SIAM Journal on Scientific Computing},
  volume={35},
  number={3},
  pages={B606--B622},
  year={2013},
  publisher={SIAM}
}

@incollection{bellomo2023behavioral,
  title={Behavioral Human Crowds and Society},
  author={Bellomo, Nicola and Gibelli, Livio},
  booktitle={Crowd Dynamics, Volume 4: Analytics and Human Factors in Crowd Modeling},
  pages={1--8},
  year={2023},
  publisher={Springer}
}

@article{bellomo2011modeling,
  title={On the modeling of traffic and crowds: A survey of models, speculations, and perspectives},
  author={Bellomo, Nicola and Dogbe, Christian},
  journal={SIAM review},
  volume={53},
  number={3},
  pages={409--463},
  year={2011},
  publisher={SIAM}
}

@article{piccoli2016properties,
  title={On properties of the generalized Wasserstein distance},
  author={Piccoli, Benedetto and Rossi, Francesco},
  journal={Archive for Rational Mechanics and Analysis},
  volume={222},
  number={3},
  pages={1339--1365},
  year={2016},
  publisher={Springer}
}

@incollection{cristiani2014evolution,
  title={Evolution in Measure Spaces with Wasserstein Distance},
  author={Cristiani, Emiliano and Piccoli, Benedetto and Tosin, Andrea},
  booktitle={Multiscale Modeling of Pedestrian Dynamics},
  pages={169--194},
  year={2014},
  publisher={Springer}
}

@article{piccoli2014generalized,
  title={Generalized Wasserstein distance and its application to transport equations with source},
  author={Piccoli, Benedetto and Rossi, Francesco},
  journal={Archive for Rational Mechanics and Analysis},
  volume={211},
  number={1},
  pages={335--358},
  year={2014},
  publisher={Springer}
}

@article{della2024learning,
  title={Learning the latent dynamics of fluid flows from high-fidelity numerical simulations using parsimonious diffusion maps},
  author={Della Pia, Alessandro and Patsatzis, Dimitrios G and Russo, Lucia and Siettos, Constantinos},
  journal={Physics of Fluids},
  volume={36},
  number={10},
  year={2024},
  publisher={AIP Publishing}
}

@article{reinbold2019data,
  title={Data-driven discovery of partial differential equation models with latent variables},
  author={Reinbold, Patrick AK and Grigoriev, Roman O},
  journal={Physical Review E},
  volume={100},
  number={2},
  pages={022219},
  year={2019},
  publisher={APS}
}

@article{joglekar2023machine,
  title={Machine learning of hidden variables in multiscale fluid simulation},
  author={Joglekar, Archis S and Thomas, Alexander GR},
  journal={Machine Learning: Science and Technology},
  volume={4},
  number={3},
  pages={035049},
  year={2023},
  publisher={IOP Publishing}
}

@article{lee2023learning,
  title={Learning black-and gray-box chemotactic PDEs/closures from agent based Monte Carlo simulation data},
  author={Lee, Seungjoon and Psarellis, Yorgos M and Siettos, Constantinos I and Kevrekidis, Ioannis G},
  journal={Journal of Mathematical Biology},
  volume={87},
  number={1},
  pages={15},
  year={2023},
  publisher={Springer}
}

@article{yazdani2020systems,
  title={Systems biology informed deep learning for inferring parameters and hidden dynamics},
  author={Yazdani, Alireza and Lu, Lu and Raissi, Maziar and Karniadakis, George Em},
  journal={PLoS computational biology},
  volume={16},
  number={11},
  pages={e1007575},
  year={2020},
  publisher={Public Library of Science San Francisco, CA USA}
}

@article{raissi2020hidden,
  title={Hidden fluid mechanics: Learning velocity and pressure fields from flow visualizations},
  author={Raissi, Maziar and Yazdani, Alireza and Karniadakis, George Em},
  journal={Science},
  volume={367},
  number={6481},
  pages={1026--1030},
  year={2020},
  publisher={American Association for the Advancement of Science}
}

@article{engelhardt2017bayesian,
  title={A Bayesian approach to estimating hidden variables as well as missing and wrong molecular interactions in ordinary differential equation-based mathematical models},
  author={Engelhardt, Benjamin and Kschischo, Maik and Fr{\"o}hlich, Holger},
  journal={Journal of The Royal Society Interface},
  volume={14},
  number={131},
  pages={20170332},
  year={2017},
  publisher={The Royal Society}
}

@article{ahmadi2024ai,
  title={AI-Aristotle: A physics-informed framework for systems biology gray-box identification},
  author={Ahmadi Daryakenari, Nazanin and De Florio, Mario and Shukla, Khemraj and Karniadakis, George Em},
  journal={PLOS Computational Biology},
  volume={20},
  number={3},
  pages={e1011916},
  year={2024},
  publisher={Public Library of Science San Francisco, CA USA}
}

@article{kutz2017deep,
  title={Deep learning in fluid dynamics},
  author={Kutz, J Nathan},
  journal={Journal of Fluid Mechanics},
  volume={814},
  pages={1--4},
  year={2017},
  publisher={Cambridge University Press}
}

@article{arbabi2021particles,
  title={Particles to partial differential equations parsimoniously},
  author={Arbabi, Hassan and Kevrekidis, Ioannis G},
  journal={Chaos: An Interdisciplinary Journal of Nonlinear Science},
  volume={31},
  number={3},
  year={2021},
  publisher={AIP Publishing}
}

@article{kemeth2022learning,
  title={Learning emergent partial differential equations in a learned emergent space},
  author={Kemeth, Felix P and Bertalan, Tom and Thiem, Thomas and Dietrich, Felix and Moon, Sung Joon and Laing, Carlo R and Kevrekidis, Ioannis G},
  journal={Nature communications},
  volume={13},
  number={1},
  pages={3318},
  year={2022},
  publisher={Nature Publishing Group UK London}
}

@article{sroczynski2024learning,
  title={On learning what to learn: Heterogeneous observations of dynamics and establishing possibly causal relations among them},
  author={Sroczynski, David W and Dietrich, Felix and Koronaki, Eleni D and Talmon, Ronen and Coifman, Ronald R and Bollt, Erik and Kevrekidis, Ioannis G},
  journal={PNAS nexus},
  volume={3},
  number={12},
  pages={pgae494},
  year={2024},
  publisher={Oxford University Press US}
}

@article{russo2007reduced,
  title={Reduced computations for nematic-liquid crystals: A timestepper approach for systems with continuous symmetries},
  author={Russo, Lucia and Siettos, Constantinos I and Kevrekidis, Ioannis G},
  journal={Journal of non-newtonian fluid mechanics},
  volume={146},
  number={1-3},
  pages={51--58},
  year={2007},
  publisher={Elsevier}
}

@article{patil2025separation,
  title={Separation of periodic orbits in the delay-embedded space of chaotic attractors},
  author={Patil, Prerna M and Kaiser, Eurika and Nathan Kutz, J and Brunton, Steven L},
  journal={Proceedings of the Royal Society A},
  volume={481},
  number={2322},
  pages={20240900},
  year={2025},
  publisher={The Royal Society}
}

@book{schilders2008model,
  title={Model order reduction: theory, research aspects and applications},
  author={Schilders, Wilhelmus HA and Van der Vorst, Henk A and Rommes, Joost},
  volume={13},
  year={2008},
  publisher={Springer}
}

@article{de2024ai,
  title={AI-Lorenz: A physics-data-driven framework for Black-Box and Gray-Box identification of chaotic systems with symbolic regression},
  author={De Florio, Mario and Kevrekidis, Ioannis G and Karniadakis, George Em},
  journal={Chaos, Solitons \& Fractals},
  volume={188},
  pages={115538},
  year={2024},
  publisher={Elsevier}
}

@article{evangelou2022parameter,
  title={On the parameter combinations that matter and on those that do not: data-driven studies of parameter (non) identifiability},
  author={Evangelou, Nikolaos and Wichrowski, Noah J and Kevrekidis, George A and Dietrich, Felix and Kooshkbaghi, Mahdi and McFann, Sarah and Kevrekidis, Ioannis G},
  journal={PNAS nexus},
  volume={1},
  number={4},
  pages={pgac154},
  year={2022},
  publisher={Oxford University Press}
}

@article{romor2023non,
  title={Non-linear manifold reduced-order models with convolutional autoencoders and reduced over-collocation method},
  author={Romor, Francesco and Stabile, Giovanni and Rozza, Gianluigi},
  journal={Journal of Scientific Computing},
  volume={94},
  number={3},
  pages={74},
  year={2023},
  publisher={Springer}
}

@article{patsatzis2025gorinns,
  title={GoRINNs: Godunov-Riemann informed neural networks for learning hyperbolic conservation laws},
  author={Patsatzis, Dimitrios G and di Bernardo, Mario and Russo, Lucia and Siettos, Constantinos},
  journal={Journal of Computational Physics},
  volume={534},
  pages={114002},
  year={2025},
  publisher={Elsevier}
}

@article{jagtap2020conservative,
  title={Conservative physics-informed neural networks on discrete domains for conservation laws: Applications to forward and inverse problems},
  author={Jagtap, Ameya D and Kharazmi, Ehsan and Karniadakis, George Em},
  journal={Computer Methods in Applied Mechanics and Engineering},
  volume={365},
  pages={113028},
  year={2020},
  publisher={Elsevier}
}

@article{chen2024learning,
  title={Learning the dynamics for unknown hyperbolic conservation laws using deep neural networks},
  author={Chen, Zhen and Gelb, Anne and Lee, Yoonsang},
  journal={SIAM Journal on Scientific Computing},
  volume={46},
  number={2},
  pages={A825--A850},
  year={2024},
  publisher={SIAM}
}

@article{huang2023limitations,
  title={On the limitations of physics-informed deep learning: Illustrations using first-order hyperbolic conservation law-based traffic flow models},
  author={Huang, Archie J and Agarwal, Shaurya},
  journal={IEEE Open Journal of Intelligent Transportation Systems},
  volume={4},
  pages={279--293},
  year={2023},
  publisher={IEEE}
}

@article{kontolati2024learning,
  title={Learning nonlinear operators in latent spaces for real-time predictions of complex dynamics in physical systems},
  author={Kontolati, Katiana and Goswami, Somdatta and Em Karniadakis, George and Shields, Michael D},
  journal={Nature Communications},
  volume={15},
  number={1},
  pages={5101},
  year={2024},
  publisher={Nature Publishing Group UK London}
}

@article{rico1992discrete,
  title={Discrete-vs. continuous-time nonlinear signal processing of Cu electrodissolution data},
  author={Rico-Martinez, Ramiro and Krischer, K and Kevrekidis, IG and Kube, MC and Hudson, JL},
  journal={Chemical Engineering Communications},
  volume={118},
  number={1},
  pages={25--48},
  year={1992},
  publisher={Taylor \& Francis}
}

@article{shvartsman2000order,
  title={Order reduction for nonlinear dynamic models of distributed reacting systems},
  author={Shvartsman, Stanislav Y and Theodoropoulos, C and Rico-Mart{\i}nez, Roberto and Kevrekidis, IG and Titi, Edriss S and Mountziaris, TJ},
  journal={Journal of Process Control},
  volume={10},
  number={2-3},
  pages={177--184},
  year={2000},
  publisher={Elsevier}
}

@article{giovanis2025generative,
  title={Generative learning of densities on manifolds},
  author={Giovanis, Dimitris G and Crabtree, Ellis and Ghanem, Roger G and Kevrekidis, Ioannis G},
  journal={arXiv preprint arXiv:2503.03963},
  year={2025}
}
\bibliographystyle{ieeetr}

\end{document}